\documentclass[12pt]{amsart}

\usepackage[unicode]{hyperref}
\usepackage{color}

\usepackage{amsmath,amsfonts,amssymb,amsthm}
\usepackage{verbatim}

\usepackage{longtable}

\pagestyle{plain}

\textwidth=500pt \textheight=650pt
\addtolength{\topmargin}{-30pt}
\addtolength{\oddsidemargin}{-2.5cm}
\addtolength{\evensidemargin}{-2.5cm} \sloppy





\numberwithin{equation}{section}

\newtheorem{dummy}{dummy}[section]

\newtheorem{theorem}[dummy]{Theorem}
\newtheorem*{theorem*}{Theorem}
\newtheorem{proposition}[dummy]{Proposition}
\newtheorem*{proposition*}{Proposition}

\newtheorem{lemma}[dummy]{Lemma}
\newtheorem{corollary}[dummy]{Corollary}
\newtheorem*{corollary*}{Corollary}
\newtheorem{conjecture}[dummy]{Conjecture}

\theoremstyle{definition}

\newtheorem{definition}[dummy]{Definition}
\newtheorem{remark}[dummy]{Remark}
\makeatletter
\newcommand{\symbitem}[1]{\item[#1]%
\renewcommand{\@currentlabel}{#1}\ignorespaces}
\makeatother
\newcommand{\beq}{\begin{equation}}
\newcommand{\eeq}{\end{equation}}
\newcommand{\beqa}{\begin{eqnarray}}
\newcommand{\eeqa}{\end{eqnarray}}
\newcommand{\beaa}{\begin{eqnarray*}}
\newcommand{\ben}{\begin{eqnarray*}}
\newcommand{\eaa}{\end{eqnarray*}}
\newcommand{\een}{\end{eqnarray*}}

\def \Db {\D^b}

\def \AA {\mathcal{A}}
\def \BB {\mathcal{B}}
\def \CC {\mathcal{C}}
\def \D {\mathcal{D}}

\def \FF {\mathcal{F}}

\def \GG {\mathcal{G}}

\def \O {\mathcal{O}}
\def\OO {\mathcal{O}}

\def \T {\mathcal{T}}


\def \C {\mathbb{C}}
\def \F {\mathbb{F}}

\def \L {\mathbb{L}}
\def \P {\mathbb{P}}
\def \Q {\mathbb{Q}}
\def \R {\mathbb{R}}

\def \Z {\mathbb{Z}}

\def \Aut {\mathrm{Aut}}

\def \ge {\geqslant}
\def \geq {\geqslant}
\def \le {\leqslant}
\def \leq {\leqslant}

\def \kappa {\varkappa}


\def\={\;=\;}
\def\bal{\begin{aligned}}
\def\eal{\end{aligned}}

\newcommand{\udot}{{\:\raisebox{3pt}{\text{\circle*{1.5}}}}}
\def \bullet {\udot}

\DeclareMathOperator{\Hom}{Hom}

\DeclareMathOperator{\rk}{rk}

\DeclareMathOperator{\Pic}{Pic}

\providecommand{\arxiv}[1]{\href{http://arxiv.org/abs/#1}{arXiv:#1}}


\date{\today}
\author{Sergey Galkin, Ludmil Katzarkov, Anton Mellit, Evgeny Shinder}

\thanks{This work was partially supported by World Premier
 International Research Center Initiative (WPI Initiative), MEXT,
  Japan, Grant-in-Aid for Scientific Research (10554503) from Japan
  Society for Promotion of Science, and AG Laboratory NRU-HSE, RF
  government grant, ag. 11.G34.31.0023.  
S.\,G. and L.\,K. were funded
  by grants NSF DMS0600800, NSF FRG DMS-0652633, NSF FRG DMS-0854977,
  NSF DMS-0854977, NSF DMS-0901330, grants FWF P 24572-N25 and FWF
  P20778, and an ERC grant --- GEMIS.
E.\,S. has been supported by the
  Max-Planck-Institut f\"ur Mathematik, SFB 45 Bonn-Essen-Mainz grant
  and the Hausdorff Center, Bonn.  
S.\,G. and A.\,M. would like to
  thank ICTP for inviting them to the ``School and Conference on
  Modular Forms and Mock Modular Forms and their Applications in
  Arithmetic, Geometry and Physics'' to Trieste in March 2011, where
  this collaboration started, and IPMU for A.\,M.'s visit in November
  2011, when the first version of this paper was completed.
Part of this work was done in June 2013 during the visit of S.\,G. 
 to Johannes Gutenberg Universit\"at Mainz, funded by SFB 45.
}

\title{Minifolds and Phantoms}

\begin{document}
\begin{flushleft}
\href{http://db.ipmu.jp/ipmu/sysimg/ipmu/1113.pdf}{IPMU 13-0102}
\end{flushleft}
\begin{abstract}
A minifold is a smooth projective $n$-dimensional variety 
such that its bounded derived category of coherent sheaves 
admits a semi-orthogonal decomposition into
a collection of $n+1$ exceptional objects. In this paper we
classify minifolds of dimension $n \leq 4$.

We conjecture that the derived category of fake projective spaces have a similar
semi-orthogonal decomposition into
a collection of $n+1$ exceptional objects and a category  
with vanishing Hochschild homology.
We prove this for fake projective planes with non-abelian automorphism group (such as Keum's surface).
Then by passing to equivariant categories
we construct new examples of phantom categories with both Hochschild homology and Grothendieck group vanishing.
\end{abstract}
\maketitle


\section{Introduction.}

The question of homological characterization of projective spaces goes
back to Severi, and the pioneering work of Hirzebruch--Kodaira \cite{HK}.
Beautiful results have been obtained by 
Kobayashi--Ochiai \cite{KO}, Yau \cite{Yau}, Fujita \cite{Fuj}, Libgober--Wood \cite{LW}.

Among smooth projective varieties of given dimension projective spaces have 
the smallest 
cohomology groups.  
We call a smooth projective variety a {\it $\Q$-homology projective space} 
if it has the same Hodge numbers as a projective space.
Any odd-dimensional quadric is an example of $\Q$-homology projective space.
We call an $n$-dimensional $\Q$-homology projective space $X$ of general
type a {\it fake projective space} if in addition it has the same
``Hilbert polynomial'' 
as $\P^n$: 
$\chi(X,\omega_X^{\otimes l}) = \chi(\P^n,\omega_{\P^n}^{\otimes l})$
for all $l\in \Z$. 
Any {\it fake projective plane} is simply a $\Q$-homology plane of general type,
since Hodge numbers of a surface determine its Hilbert polynomial. 
On the level of realizations over $\C$, e.g. from the point of
view of the Hodge structure, fake projective spaces are identical to
projective spaces,
however the study of
their $K$-theory, motive or
derived category
meets cohomological subtleties.

The first example of a fake projective plane
was constructed by Mumford \cite{FPP-Mu} using $p$-adic uniformization
developed by Drinfeld \cite{Dri} and Mustafin \cite{MUS}.
From the point of view of complex geometry fake projective planes have
been studied by Aubin \cite{Au} and Yau \cite{Yau}, who proved
that any such surface $S$ is uniformized by a complex ball,
hence by Mostow's rigidity theorem $S$ 
is determined by its fundamental group $\pi_1(S)$
uniquely up to complex conjugation;
Kharlamov--Kulikov \cite{KK} shown that
the conjugate surfaces are distinct (not biholomorphic).
Further Klingler \cite{Kl}
and Yeung \cite{Ye07}
proved that $\pi_1(S)$ 
is a torsion-free cocompact \emph{arithmetic} subgroup of $PU(2,1)$.
Finally such groups have been classified by
Cartwright--Steger \cite{CS} and Prasad--Yeung \cite{PY07}:
there are $50$ explicit subgroups and
so all fake projective planes fit into
$100$ isomorphism classes.

Fake projective fourspaces were introduced and studied by Prasad and
Yeung in \cite{PY09}.

\medskip

In this paper we take a different perspective that started 
with a seminal discovery of \emph{full exceptional collections} 
by Beilinson \cite{Be}, 
Kapranov \cite{Ka}, 
Bondal and Orlov \cite{BO} 
with Bondal's students Kuznetsov, Razin, Samokhin (see \cite{KuG2}):
they found out that all known to them examples of Fano $\Q$-homology projective spaces
admit a full exceptional collection of vector bundles.
They put a conjecture that gives a homological characterization of projective spaces based on derived categories,
and in this paper we prove it in Theorem \ref{mainthm}(3).

We call an $n$-dimensional smooth complex projective variety a {\it\bfseries minifold}
if it has a full exceptional collection of minimal possible length
$n+1$ in its bounded derived category of coherent sheaves.  A minifold
is necessarily a $\Q$-homology projective space.  Projective spaces
and odd-dimensional quadrics are examples of minifolds
\cite{Be,Ka}.

It follows from work of Bondal, Bondal--Polishchuk and Positselski
\cite{Bon90,BP,Pos}, that if a minifold $X$ is not Fano then all full
exceptional collections on it are not strict and consist not of pure
sheaves. In fact it is expected that all minifolds are Fano.

The novelty of this paper is the following  main theorem, which gives a classification of minifolds in
dimension less than or equal to $4$ (with one-dimensional case being trivial).

\begin{theorem}\label{mainthm}
1) The only two-dimensional minifold is $\P^2$.

2) The minifolds of dimension $3$ are:
the projective space $\P^3$
the quadric $Q^3$,
the del Pezzo quintic threefold $V_5$,
and
a six-dimensional family of Fano threefolds $V_{22}$.

3) The only four-dimensional Fano minifold is $\P^4$.
\end{theorem}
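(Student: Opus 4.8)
The plan is to treat the three cases through one common set of constraints and then feed them into the classification theory of Fano varieties. First I would record that a minifold $X$ of dimension $n$ is a $\Q$-homology projective space with Hodge--Tate cohomology, so $b_{2i}(X)=1$ for $0\le i\le n$, the odd Betti numbers vanish, and $h^{p,q}(X)=0$ for $p\ne q$. In particular $\rk\Pic(X)=1$; write $H$ for the ample generator and $K_X=mH$. The single most useful input is that a full exceptional collection of length $n+1$ forces $K_0(X)\cong\Z^{n+1}$, \emph{torsion-free}: this is what will exclude the ``fake'' candidates, whose complement to $n+1$ exceptional objects would be a nonzero phantom (with vanishing $K_0$ and Hochschild homology) rather than $0$.

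For $n=2$, Noether's formula with $\chi(\O_S)=1$ and topological Euler number $3$ gives $K_S^2=9$; since $p_g=q=0$, the Enriques classification leaves only $\P^2$ or a minimal surface of general type, that is, a fake projective plane. Every fake projective plane has nonzero torsion in $H_1(S,\Z)=\Pic(S)_{\mathrm{tors}}$ (Cartwright--Steger), hence torsion in $K_0(S)$, contradicting $K_0(S)\cong\Z^3$; so $S=\P^2$. The same torsion argument also drives the reduction to the Fano case in general: with $\rk\Pic=1$ a non-Fano minifold has $m\ge 0$; the case $m=0$ is excluded by $h^{n,0}=0$, and $m>0$ makes $X$ a fake projective space, excluded by torsion-freeness of $K_0$ whenever that torsion is known to be present. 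This is precisely why part (3) is stated only for Fano $X$: fake projective fourspaces exist (Prasad--Yeung) and cannot yet be ruled out unconditionally.

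Now let $X$ be a Fano threefold with $\rk\Pic=1$, so Iskovskikh's classification applies. Imposing the remaining numerical constraint $b_3(X)=2h^{1,2}(X)=0$ (vanishing intermediate Jacobian) eliminates every family except four: $\P^3$ (index $4$), the quadric $Q^3$ (index $3$), the del Pezzo quintic $V_5$ (the unique index-$2$ threefold with $h^{1,2}=0$), and the genus-$12$ family $V_{22}$ (the unique index-$1$ prime Fano threefold with $h^{1,2}=0$, a six-dimensional family, matching the statement). To finish I would exhibit a length-$4$ full exceptional collection on each: Beilinson for $\P^3$, Kapranov for $Q^3$, and the known collections of Orlov and Kuznetsov for $V_5$ and $V_{22}$.

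For $n=4$ with $X$ Fano of $\rk\Pic=1$, Kobayashi--Ochiai bounds the Fano index by $r\le 5$, with $r=5$ forcing $X=\P^4$. The value $r=4$ gives the quadric $Q^4$, excluded immediately since $b_4(Q^4)=2\ne 1$. The cases $r\le 3$ I would dispose of using Fujita's classification of del Pezzo fourfolds and Mukai's of index-$2$ Fano fourfolds, in each instance checking that $b_{2i}=1$ and the length-$5$ requirement cannot both be met; this removes all remaining candidates and leaves $\P^4$. The main obstacle is twofold: first, showing that a minifold must be Fano, which I can carry out only through dimension $3$ via the torsion argument against fake projective spaces and which genuinely ceases to be available in dimension $4$; and second, within the Fano world, pushing the low-index ($r\le 3$) fourfold classification far enough to exclude every non-$\P^4$ candidate by numerical invariants alone.
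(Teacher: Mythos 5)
Your treatment of parts (1) and (2) in the Fano range matches the paper (Iskovskikh's list of Fano threefolds with $b_2=1$, $b_3=0$, plus the collections of Beilinson, Kapranov, Orlov and Kuznetsov), and the dimension-$2$ exclusion of fake projective planes via torsion is essentially the paper's argument. But there are two genuine gaps. First, in dimension $3$ you never actually exclude minifolds of general type: your torsion argument is offered only ``whenever that torsion is known to be present,'' and for a hypothetical general-type $\Q$-homology $\P^3$ there is no classification and hence no torsion computation to invoke. The paper closes this case by pure Chern-number arithmetic: $K_X$ ample forces $c_1^3<0$, while $\chi(\O_X)=1$ and Todd's theorem give $c_1c_2=24$, contradicting Yau's inequality $c_1^3\ge\frac83\,c_1c_2$. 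So general-type $\Q$-homology projective threespaces do not exist at all, and no torsion input is needed.

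Second, and more seriously, your plan for part (3) would fail as stated. Fano fourfolds of index $r\le 3$ with $\rk\Pic=1$ are not completely classified --- in particular there is no classification of index-$1$ Fano fourfolds --- so ``checking that $b_{2i}=1$ and the length-$5$ requirement cannot both be met'' cannot be carried out case by case. The paper sidesteps Fano classification entirely: by Wilson and Yeung, a $\Q$-homology projective fourspace is either $\P^4$, a fake projective fourspace (general type, hence irrelevant to the Fano statement), or a ``Wilson fourfold'' with $c_1^4=225$ and an explicitly known Hilbert polynomial. The whole content of part (3) is then the exclusion of Wilson's fourfolds, which is done by a $K$-theoretic lattice obstruction: the Gram matrix of the Euler pairing on $[\O],[\O(1)],\dots,[\O(4)]$ has determinant $\deg(X)^5=225^5$, so a full exceptional collection would force a semiorthonormal basis for this form modulo $2$, and Lemma \ref{a2} shows by a finite computation (using the order-$8$ Serre operator on $\F_2^5$) that no such basis exists. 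This mod-$2$ obstruction is the key idea missing from your proposal; without it, or some substitute for it, the low-index case remains open.
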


In Section 2 we recall the necessary definitions and facts.
In particular in Proposition \ref{fec} we recall that
varieties admitting full exceptional collections have
Tate motives with rational coefficients \cite{MT} and outline
a straightforward proof of that fact. 
Section 2 finishes with the proof of Theorem \ref{mainthm}.  

We also show that except for $\P^4$ the only possible minifolds of
dimension $4$ are non-arithmetic fake projective fourfolds,
which presumably do not exist \cite{Ye-lectures} (paragraph 4 and section 8.4).

In fact, study of minifolds is closely related to study
of the fake projective spaces.
The reason is that fake projective spaces sometimes admit exceptional collections of the appropriate length
but these collections fail to be full. In this case the orthogonal 
to such a collection is a so-called phantom.

More precisely, we call an admissible non-zero subcategory $\AA$ of a derived category of coherent
sheaves an {\bf $H$-phantom} if $HH_*(\AA) = 0$ and
a {\bf $K$-phantom} if $K_0(\AA) = 0$.

\medskip

In Section $3$ we formulate a conjecture that under some mild conditions
fake projective $n$-spaces admit non-full exceptional collections of
length $n+1$ and thus have $H$-phantoms in their derived categories
(Conjecture \ref{conj-collections} and its Corollary
\ref{conj-phantoms}).  
We prove this conjecture for fake projective planes admitting an
action of the non-abelian group $G_{21}$ of order $21$.
\begin{theorem}\label{theorem-fpp-G21}
Let $S$ be one of the six fake projective planes with automorphism
group of order $21$. Then $K_S = \OO(3)$ for a unique line bundle $\OO(1)$ on $S$.
Furthermore $\OO$, $\OO(-1)$, $\OO(-2)$ is an exceptional collection on $S$.
\end{theorem}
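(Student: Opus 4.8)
The first assertion is a computation in $\Pic(S)$. Since $S$ is a fake projective plane we have $q(S)=p_g(S)=0$, so the exponential sequence identifies $\Pic(S)$ with $H^2(S,\Z)$; its free part is $\Z$ and its torsion subgroup is $H_1(S,\Z)$. Writing $\Pic(S)=\Z\cdot H_0\oplus T$ with $H_0$ the ample generator and $T=H_1(S,\Z)$, the equality $K_S^2=9$ shows that $K_S\equiv 3H_0$ modulo $T$ precisely when $H_0^2=1$, the only alternative being $K_S\equiv H_0$ with $H_0^2=9$. I would settle both required facts --- that $H_0^2=1$ (equivalently, $K_S$ is divisible by $3$ in $H^2(S,\Z)$) and that $T$ has no $3$-torsion --- by reading them off the explicit arithmetic data for the six surfaces with $\Aut(S)=G_{21}$ in the Cartwright--Steger and Prasad--Yeung classification \cite{CS, PY07}. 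Once $T$ has no $3$-torsion, multiplication by $3$ is a bijection of the finite group $T$, so the torsion ambiguity in a cube root is resolved uniquely and $K_S=3\,\OO(1)$ for a single line bundle $\OO(1)=H_0+t'$, where $3t'$ is the torsion component of $K_S$.

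For the exceptional collection, note first that each object is exceptional because $\Ext^*(\OO(-i),\OO(-i))=H^*(S,\OO_S)=\C$, immediate from $q=p_g=0$. Semiorthogonality requires $\Ext^*(\OO(-i),\OO(-j))=H^*(S,\OO(i-j))=0$ for $i>j$, and since $i-j\in\{1,2\}$ the whole statement reduces to $H^*(S,\OO(1))=H^*(S,\OO(2))=0$. Riemann--Roch gives $\chi(\OO(n))=1+\tfrac12 n(n-3)$, so $\chi(\OO(1))=\chi(\OO(2))=0$, while Serre duality gives $H^2(\OO(1))\iso H^0(\OO(2))^\vee$ and $H^2(\OO(2))\iso H^0(\OO(1))^\vee$. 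Thus everything collapses to the two vanishings $h^0(\OO(1))=0$ and $h^0(\OO(2))=0$, after which the middle cohomology vanishes for free because $\chi=0$. The first is immediate: a nonzero $s\in H^0(\OO(1))$ would have nonzero cube $s^{\otimes 3}\in H^0(S,\OO(3))=H^0(S,K_S)$, contradicting $p_g=0$ (the section ring of the integral variety $S$ has no zero divisors).

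The second vanishing, $h^0(\OO(2))=0$, is the heart of the matter and the only place where $\Aut(S)=G_{21}$ is used. If $h^0(\OO(2))\ge 1$, pick a nonzero section $t$; multiplication by $t$ embeds $H^0(\OO(2))\hookrightarrow H^0(\OO(4))$, and $\OO(4)=K_S\otimes\OO(1)$ has $h^0=\chi=3$ with no higher cohomology by Kodaira vanishing (as $\OO(1)$ is ample), so $h^0(\OO(2))\le 3$. Because $\OO(1)$ is the unique cube root of the $\Aut$-invariant $K_S$, it and all its powers carry $G_{21}$-linearizations, and since $3$ is invertible modulo $7$ the induced $\Z/7$-action is canonical; hence these cohomology groups are $G_{21}$-modules and the multiplication maps are equivariant. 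The value $h^0(\OO(2))=3$ I would exclude elementarily: then multiplication by any nonzero section is an isomorphism $H^0(\OO(2))\iso H^0(\OO(4))$, so the divisors of two independent sections $t_1,t_2$ satisfy $\mathrm{div}(t_2)\le 2\,\mathrm{div}(t_1)$ and symmetrically, which forces $\mathrm{div}(t_1)=\mathrm{div}(t_2)$ and makes $t_1,t_2$ proportional, a contradiction.

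The values $1$ and $2$ are where the group structure is genuinely needed, and this is the step I expect to be the main obstacle. The order-$7$ element of $G_{21}$ has exactly three isolated fixed points (by the holomorphic Lefschetz count, its action on $H^*(S,\Q)$ being trivial). The hard, computational part is to determine the tangent weights at these three points and to substitute them into the holomorphic Lefschetz fixed point formula for $\OO(4)$, which should identify $H^0(\OO(4))$ as a faithful three-dimensional irreducible $G_{21}$-representation --- in particular one with no $\Z/7$-invariant vector. Granting this, $h^0(\OO(2))=1$ is impossible because the resulting $\Z/7$-invariant section would square to a nonzero $\Z/7$-invariant element of $H^0(\OO(4))$, of which there are none; and $h^0(\OO(2))=2$ is impossible because $G_{21}$ has no two-dimensional irreducible representation, so $H^0(\OO(2))$ would be a sum of characters, $\Z/7$ would act trivially on it and hence on the image of the equivariant multiplication $\mathrm{Sym}^2H^0(\OO(2))\to H^0(\OO(4))$, contradicting faithfulness of the target. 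Everything outside the fixed-point weight computation is formal, so it is precisely the explicit equivariant identification of $H^0(\OO(4))$ on which the proof turns.
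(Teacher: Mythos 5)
Your overall strategy coincides with the paper's: uniqueness of the cube root of $K_S$ from the absence of $3$-torsion in $H_1(S,\Z)$ (the paper gets $H_0^2=1$ from unimodularity of $H^2(S,\Z)/\mathrm{tors}$ via Poincar\'e duality rather than from the classification tables), reduction of exceptionality to $h^0(\OO(2))=0$ via Riemann--Roch and Serre duality, identification of $H^0(S,\OO(4))$ as a $3$-dimensional irreducible $G_{21}$-representation via the holomorphic Lefschetz formula at the three fixed points of the order-$7$ element, and a Schur-type argument to kill $H^0(\OO(2))$. The computation you defer (pinning the tangent weights by the $k=0$ instance of the fixed point formula, then evaluating the $k=4$ trace) is exactly the content of the paper's key lemma, and your exclusions of $h^0(\OO(2))=1,2$ are the paper's argument in only slightly different clothing.

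The one step that fails is your ``elementary'' exclusion of $h^0(\OO(2))=3$. Surjectivity of multiplication by each nonzero section does give $\mathrm{div}(t_1)\le 2\,\mathrm{div}(t_2)$ and $\mathrm{div}(t_2)\le 2\,\mathrm{div}(t_1)$, but these two inequalities do not force $\mathrm{div}(t_1)=\mathrm{div}(t_2)$: for instance $D_1=A+2B$ and $D_2=2A+B$ satisfy both. Nor can representation theory rescue this case: if $h^0(\OO(2))=3$ the space could a priori be the irreducible $V_3$ or $\overline{V_3}$, and $\mathrm{Sym}^2 V_3\cong V_3\oplus\overline{V_3}$, so the equivariant multiplication map to $H^0(\OO(4))$ need not vanish. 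The paper avoids the case altogether with the bilinear-system inequality $\dim \im(\phi)\ge h^0(L)+h^0(L')-1$ of Lemma \ref{lemma-lin-systems}, which for $L=L'=\OO(2)$ gives $2h^0(\OO(2))-1\le h^0(\OO(4))=3$, i.e.\ $h^0(\OO(2))\le 2$ at the outset, and, applied once more after Schur's lemma forces the multiplication map to be zero, yields $h^0(\OO(2))=0$. Substituting that lemma for your divisor argument repairs the proof. A smaller point: mere $\Aut(S)$-invariance of $\OO(1)$ does not produce a linearization; the paper invokes the Cartwright--Steger lift of $N(\Pi)$ to $SU(2,1)$, and your ``canonical $\Z/7$-action'' remark covers only the cyclic subgroup, whereas you also need the full $G_{21}$-module structure on $H^0(\OO(2))$ to conclude that $\Z/7$ acts trivially there.
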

Most of Section 3 deals with the proof of this Theorem,
which relies on the holomorphic Lefschetz fixed point formula 
applied to the three fixed points of an element of order $7$ 
as in \cite{Ke09}.

\medskip

It follows from Theorem \ref{theorem-fpp-G21} that $\Db(S)$ has an $H$-phantom
subcategory $\AA_S$. We show that this $H$-phantom descends to
an $H$-phantom $\AA_S^G$ in the equivariant derived categories $\Db_G(S)$
for any $G \subset G_{21}$. 
In particular, when $G = \Z/3$ this gives yet new examples of surfaces 
having an $H$-phantom in their derived categories 
(fake cubic surfaces: see Remark \ref{fake-cubics}).

Finally, in four cases $G \supset \Z/7$ \emph{and} surface $S/G$ is simply-connected,
then we show that 
the $H$-phantom $\AA_S^G$ is also a $K$-phantom (Proposition \ref{K-phantoms}).

\medskip

In the Appendix we give a table of arithmetic subgroups $\Pi \subset PSU(2,1)$
giving rise to fake projective planes and the corresponding automorphism
and first homology groups. These results are taken from the computations
of Steger and Cartwright \cite{CScode}.

\medskip



It took a long way for the paper
to take its present form. We would like to thank our friends
and colleagues with whom we had fruitful discussions on the topic.

We thank
Denis Auroux,		
Alexey Bondal,		
Paul Bressler,		
Alessio Corti,		%
Igor Dolgachev,		
Alexander Efimov,	
Sergey Gorchinskiy,     
Jeremiah Heller,	
Daniel Huybrechts,	%
Umut Isik,		
Yujiro Kawamata,	
Maxim Kontsevich,	%
Viktor Kulikov,		
Alexander Kuznetsov, 	
Serge Lvovski,		
Dmitri Orlov,		
Dmitri Panov,		
Tony Pantev,		
Yuri Prokhorov,		
Kyoji Saito,		
Konstantin Shramov,	
Duco van Straten,	
Misha Verbitsky,	
Vadim Vologodsky, 	
and
Alexander Voronov	
for their useful suggestions, references and careful proofreading.
We thank Donald Cartwright, Philippe Eyssidieux and Bruno Klingler, Gopal Prasad, Sai Kee Yeung, 
for answering our questions about fake projective planes and fourspaces.

\medskip

While this paper was in preparation, Najmuddin Fakhruddin gave a proof of Conjecture \ref{conj-collections}
for those fake projective planes that admit $2$-adic uniformization \cite{Fa},
in particular for Mumford's fake projective plane. 
These cases are disjoint from the ones that satisfy the assumptions of our
Theorem \ref{theorem-fpp-G21}.
An idea of a proof in the case of Mumford's fake projective plane similar to that of \cite{Fa}
was also hinted to us by Dolgachev.
Independently it was sketched to us by van Straten in June 2013:
van Straten and Spandau has an unpublished work circa 2001, where based on description of Ishida \cite{Ishida} 
they construct the reduction modulo two of the $2$-canonical image of Mumford's fake projective plane 
as an image of $\P^2(\F_2)$ by an explicit $10$-dimensional linear system of octics (plane curves of degree $8$),
one then checks that these octics are not divisible by $3$ as Weil divisors.
Our proof of Theorem \ref{theorem-fpp-G21} is indebted to discussions 
with Panov in June 2012 (he suggested us to exploit extra symmetries of Keum's surface)
and with Dolgachev in June 2013 (then we looked for higher-dimensional irreducible representations of non-abelian groups).
The rest of the ideas of the paper is from 2011.

\section{Minifolds}

An {\it exceptional collection}
 of length $r$
on a smooth projective variety $X/\C$ 
is a sequence of objects $E_1, \dots E_r$ in 
the bounded derived category of coherent sheaves $\Db(X)$ 
such that $\Hom(E_j, E_i[k]) = 0$ for all $j > i$ and $k \in \Z$,
and moreover each object $E_i$ is exceptional, that is
spaces $\Hom(E_i,E_i[k])$ vanish for all $k$ except for
one-dimensional spaces $\Hom(E_i,E_i)$.  An exceptional collection is
called {\it full} if the smallest triangulated subcategory which
contains it, coincides with 
$\Db(X)$. 

\begin{proposition} \label{fec} Assume that $X$ admits a full
  exceptional collection of length $r$. 
Then:

(1) The Chow motive of $X$ with rational coefficients is a direct sum
of $r$ Tate motives $\L^j$.  In particular, all cohomology classes on
$X$ are algebraic.  (For the definition and properties of Chow motives
see \cite{Ma68}.)

(2) $H^{p,q}(X) = 0$ for $p \ne q$ and $\chi(X) = \sum h^{p,p}(X) = r$.

(3) $\Pic(X)$ is a free abelian group of finite rank.  Moreover the
first Chern class map gives an isomorphism $\Pic(X) \cong H^2(X,\Z)$.

(4) $H_1(X, \Z) = 0$.

(5) The Grothendieck group $K_0(X) = K_0(\Db(X))$ is free of rank $r$ and
the bilinear Euler pairing
\[
\chi(E,F) = \sum_i (-1)^i \dim \Hom(E,F[i])
\]
is non-degenerate and unimodular.
Classes $[E_i]$ of exceptional objects form a semi-orthonormal basis in $K_0(X)$.
(By a semiorthonormal basis we mean a basis $(e_i)_{i=0}^n$ such that
$\chi(e_j, e_i) = 0, \, j > i$ and $\chi(e_i,e_i) = 1$.)
\end{proposition}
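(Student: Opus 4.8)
The common engine for all five statements is the semi-orthogonal decomposition $\Db(X) = \langle \E_1, \dots, \E_r\rangle$, with each $\E_i = \langle E_i\rangle \iso \Db(\Spec \C)$, that a full exceptional collection provides; the key point is that this decomposition is \emph{geometric}, i.e. the projection functors onto the $\E_i$ are Fourier--Mukai, so it can be transported to $K$-theory, cohomology and Chow motives. The plan is to begin with (5), which is purely formal: a semi-orthogonal decomposition splits the Grothendieck group, so $K_0(X) = \bigoplus_i K_0(\E_i) = \bigoplus_i \Z\cdot[E_i]$ is free of rank $r$. Semi-orthogonality gives $\chi(E_j, E_i) = \sum_k (-1)^k \dim \Hom(E_j, E_i[k]) = 0$ for $j > i$, while exceptionality gives $\chi(E_i, E_i) = 1$; hence the Gram matrix of $\chi$ in the basis $([E_i])$ is unitriangular, so it is non-degenerate and unimodular and $([E_i])$ is a semi-orthonormal basis.

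For (1) the plan is to decompose the diagonal. Fullness yields, by Bondal's theory of mutations, a right dual collection $(F_i)$ characterized by $\RHom(E_i, F_j) = \C\cdot \delta_{ij}$ concentrated in degree $0$. The projection onto $\E_i$ is then the Fourier--Mukai functor with kernel $K_i \iso F_i^\vee \boxtimes E_i \in \Db(X \times X)$, and fullness is precisely the statement that the structure sheaf $\OO_\Delta$ of the diagonal admits a filtration in $\Db(X\times X)$ whose graded pieces are the $K_i$. Passing to $\mathrm{CH}^*(X \times X)_\Q$ and applying the Chern character (with Grothendieck--Riemann--Roch controlling the composition of correspondences) will produce classes $\pi_i$ with $\sum_i \pi_i = [\Delta]$ and $\pi_i \circ \pi_j = \delta_{ij}\,\pi_i$. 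Because each $\pi_i$ is assembled from the single classes $\ch(E_i)$ and $\ch(F_i)$, it factors $M(X)$ through $\Spec\C$ and so cuts out a Tate motive $\L^{a_i}$, the twist $a_i$ being read off from the codimension of the leading term; hence $M(X)_\Q \iso \bigoplus_i \L^{a_i}$, and in particular every cohomology class is algebraic (alternatively one may quote \cite{MT}). The delicate point here is verifying the orthogonality and idempotence of the $\pi_i$, which I would deduce formally from the duality relations $\RHom(E_i, F_j) = \C\cdot\delta_{ij}$.

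Statement (2) is then immediate: a summand $\L^{a}$ contributes a one-dimensional $H^{a,a}$ and nothing off the diagonal, so $H^{p,q}(X) = 0$ for $p \ne q$ and $\chi(X) = \sum_p h^{p,p} = r$. For (3) the plan is to feed the vanishings $h^{0,1} = H^1(X,\OO_X) = 0$ and $h^{0,2} = H^2(X,\OO_X) = 0$ coming from (2) into the exponential sequence $H^1(X,\OO_X) \to \Pic(X) \xrightarrow{c_1} H^2(X,\Z) \to H^2(X,\OO_X)$, whose flanking terms now vanish, giving $c_1 \colon \Pic(X) \iso H^2(X,\Z)$; finite generation of $H^2$ is automatic. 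For (4), the Hodge-theoretic vanishing gives $b_1 = 2h^{1,0} = 0$, so $H_1(X,\Z)$ is finite. The main obstacle, and the only step that is not formal, is ruling out torsion: the rational motive only controls $H^*(X,\Q)$, so it sees neither the torsion in $H^2(X,\Z)$ nor that in $H_1(X,\Z)$, and by universal coefficients these two torsion groups coincide. To kill them I would argue integrally rather than rationally --- for instance by showing that the exceptional collection also generates the topological $K$-theory, so that Atiyah--Hirzebruch forces $H^{\mathrm{odd}}(X,\Z) = 0$ and $H^{\mathrm{even}}(X,\Z)$ torsion-free, which yields simultaneously $H_1(X,\Z)=0$ and the freeness of $\Pic(X) \iso H^2(X,\Z)$. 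I expect this integral refinement to be the crux of the argument.
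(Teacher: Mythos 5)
Your treatment of (1), (2) and (5) is sound and essentially the paper's own route: the paper likewise decomposes the class of the diagonal, obtaining $[\Delta]=\sum_j p_1^*\alpha_j\cdot p_2^*\beta^j$ in $CH^*(X\times X)_\Q$ from the full exceptional collection $p_1^*E_i\otimes p_2^*E_j$ on $X\times X$ together with the Chern character and Grothendieck--Riemann--Roch, and then shows that the $\alpha_j$ and $\beta_j$ are dual bases of $CH^*(X)_\Q$ defining mutually inverse morphisms between $M(X)$ and a sum of Tate motives; your packaging via the dual collection and explicit idempotents $\pi_i$ is an equivalent way of organizing the same computation. You have also correctly located the crux: the rational motive controls neither the torsion of $\Pic(X)\cong H^2(X,\Z)$ nor, equivalently, that of $H_1(X,\Z)$.

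However, your proposed resolution of that crux has a genuine gap. First, the assertion that a full exceptional collection generates the topological $K$-theory of $X$ is itself a nontrivial theorem (one needs that semiorthogonal decompositions induce direct-sum decompositions of topological $K$-theory), and you do not indicate how to prove it. Second, and more seriously, even granting $K^0_{top}(X)\cong\Z^r$ and $K^1_{top}(X)=0$, the Atiyah--Hirzebruch spectral sequence does not ``force'' $H^{odd}(X,\Z)=0$ and $H^{even}(X,\Z)$ torsion-free: the differentials are torsion operations, so torsion classes in $E_2=H^*(X,\Z)$ can die before $E_\infty$ without contributing to the abutment, and conversely the graded pieces of a filtration on a free abelian group can themselves have torsion (already $2\Z\subset\Z$ shows this), so freeness of $K^0_{top}$ gives no control on $E_\infty$, let alone on $E_2$. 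The paper closes the gap by a purely algebraic argument (Lemma \ref{lemma-pic}) that stays inside the algebraic $K_0$ you already computed in (5): if $L$ is a $p$-torsion line bundle, set $N=[L]-1\in K_0(X)$; then $N$ lies in the first step of the multiplicative topological filtration, hence is nilpotent, and expanding $1=[L^{\otimes p}]=(1+N)^p=1+pN+N^2\alpha$ and multiplying by $N^{k-2}$ (where $N^k=0$, $N^{k-1}\neq 0$) yields $pN^{k-1}=0$, i.e.\ nontrivial $p$-torsion in $K_0(X)$, contradicting the freeness established in (5). You should replace the topological $K$-theory step by an argument of this kind, or else supply the missing integral comparison and AHSS analysis, which I do not believe goes through as stated.
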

\begin{proof}
Most of the claims are well-known.  (1) is proved in \cite{MT} using
the language of non-commutative motives and in \cite{GO} using
$K$-motives. We give a direct proof of (1) using the ideas developed in \cite{OrMotives} (see also \cite{BB})
for the sake of completeness.

First observe that the structure sheaf of the diagonal $\OO_\Delta$ in
the derived category $D^b(X \times X)$ lies in the full triangulated
subcategory generated by the objects $p_1^* F_1 \otimes p_2^* F_2$.
This can be deduced from the standard fact that if $E_1, \dots, E_r$
is a full exceptional collection on $X$, then $p_1^* E_i \otimes p_2^* E_j$
forms a full exceptional collection on $X \times X$ 
\cite{BVdB}(see Lemma 3.4.1 
and note that for the category generated by an exceptional collection 
the notions of generator and strong generator coincide,
so taking direct summands is not necessary), \cite{Boh}, \cite{Sam}.

It follows that the class of the diagonal $[\OO_\Delta] \in K_0(X \times X)$
has a decomposition
\begin{equation}\label{diagK}
[\OO_\Delta] = \sum_j p_1^* [\FF_j] \cdot p_2^* [\GG_j],
\end{equation}
for some sheaves $\FF_j, \GG_j$ on $X$.

Applying the Chern character to (\ref{diagK}) and using the Grothendieck-Riemann-Roch
formula
\[
ch(\OO_\Delta) = [\Delta] \cdot p_2^* td(X)
\]
we obtain an analogous decomposition for the class of the diagonal $[\Delta] \in CH^*(X \times X)_\Q$:
\begin{equation}\label{diagCH}
[\Delta] = \sum_j p_1^* \alpha_j \cdot p_2^* \beta^j,
\end{equation}
for some classes $\alpha_j, \beta_j \in CH^*(X)_\Q$.
We may assume that $\alpha_j$ are homogenous, say $\alpha_j \in CH^{a_j}(X)_\Q$
and hence $\beta_j \in CH^{dim(X)-a_j}(X)_\Q$.

We claim that the set $\{\alpha_j\}$ spans $CH^*(X)_\Q$.
Indeed for any $\alpha \in CH^*(X)_\Q$ we have
\begin{equation}\label{basisCH}\bal
\alpha &= p_{1*}( [\Delta] \cdot p_2^* \alpha ) = \\
&= p_{1*}( (\sum_j p_1^* \alpha_j \cdot p_2^* \beta^j) \cdot p_2^* \alpha  ) = \\
&= p_{1*}( \sum_j p_1^* \alpha_j \cdot p_2^* (\beta^j \cdot \alpha ) ) = \\
&= \sum_j \alpha_j \cdot p_{1*}( p_2^* (\beta^j \cdot \alpha ) ) = \\
&= \sum_j \langle \beta^j, \alpha \rangle \alpha_j.  \\
\eal\end{equation}

Here we use the notation $\langle \alpha, \beta \rangle$ for the bilinear form $deg(\alpha \cdot \beta)$.

We may assume that $\{\alpha_j\}$ are linearly independent,
that is form a homogeneous basis of $CH^*(X)_\Q$.
From the formula (\ref{basisCH}) we see that $\{\beta_j\}$ is
a dual basis.

We now define an isomorphism $M(X) \cong \oplus_{j} \L^a_j$.
By definition of the morphisms in the category of Chow motives we
have
\[\bal
Hom( \L^a, M(X) ) &= CH^a(M) \\
Hom( M(X), \L^a ) &= CH^{dim(X)-a}(M) \\
\eal\]

Therefore the set $\{\alpha_j\}$ determines a morphism of motives
\[
\Phi: \oplus_{j} \L^{a_j} \to M(X)
\]
and the $\{\beta_j\}$ determines a morphism in the opposite direction
\[
\Psi: M(X) \to \oplus_{j} \L^{a_j}.
\]

The composition $\Psi \circ \Phi$ is equal to identity due to the fact
that $\{\alpha_j\}$ and $\{\beta_j\}$ are dual bases.
The composition $\Phi \circ \Psi$ is equal to identity because
of the decomposition (\ref{diagCH}).

By taking Hodge realization (1) implies (2).
Alternatively, we can deduce (2) from Hochschild--Kostant--Rosenberg  theorem
\[
HH_i(\D^b(X)) \cong \oplus_{p-q=i} H^{p,q}(X)
\]
and additivity of Hochschild homology for semiorthogonal
decompositions \cite{Kel,KuHH}:
if $\CC=\langle\AA,\BB\rangle$ then $HH_i(\CC) = HH_i(\AA) \oplus HH_i(\BB)$. 

The fact that $\Pic(X)$ is free follows from (5) and Lemma
\ref{lemma-pic} below.  The isomorphism $\Pic(X) \cong H^2(X, \Z)$
comes from the exponential long exact sequence and (2).  $\Pic(X)$ is
of finite rank since it is isomorphic to $H^2(X, \Z)$.

To prove (4) note that
the Universal Coefficient Theorem implies that we have a non-canonical isomorphism
\[
H^2(X,\Z) \cong \Z^{\rk} \oplus H_1(X,\Z)^{tors}
\]
which by (3) implies that $H_1(X,\Z)$ must be torsion-free as well.
On the other hand $h^{1,0}(X) = 0$ and hence
$H_1(X,\Z) = 0$.

(5) follows easily from definitions.

\end{proof}

\begin{lemma} \label{lemma-pic}
Let $X$ be a smooth algebraic variety such that $K_0(X)$ has no $p$-torsion.
Then $\Pic(X)$ has no $p$-torsion.
\end{lemma}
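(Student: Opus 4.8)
The plan is to prove the contrapositive in its sharpest form: I will show that if $\Pic(X)$ contains a nonzero $p$-torsion class, then $K_0(X)$ contains a nonzero $p$-torsion class as well. So suppose $L$ is a line bundle with $L^{\otimes p} \cong \OO_X$ but $L \not\cong \OO_X$, and set $x = [L] - [\OO_X] \in K_0(X)$. Since $X$ is smooth, $K_0(X)$ carries a commutative ring structure with $[\OO_X] = 1$, in which $[L]$ is a unit with inverse $[L^{-1}]$ and $[L]^p = [L^{\otimes p}] = 1$; equivalently $(1+x)^p = 1$.

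The first key point is that $x$ is \emph{nilpotent}. Indeed $[L]$ and $[\OO_X]$ both have rank $1$ on every connected component, so $x$ has rank zero and lies in the first step $F^1$ of the coniveau (topological) filtration on $K_0(X)$. This filtration is multiplicative, $F^i \cdot F^j \subseteq F^{i+j}$, and vanishes beyond the dimension, $F^{d+1} = 0$ where $d = \dim X$; hence $x^{d+1} \in F^{d+1} = 0$. This is the only nonformal input, and I would simply cite the standard multiplicativity and finiteness of the coniveau filtration on a smooth variety (equivalently, the nilpotence of $[L]-1$ for a line bundle on a finite-dimensional scheme).

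The second point is an elementary $p$-adic manipulation of $(1+x)^p = 1$. Expanding and using $p \mid \binom{p}{k}$ for $1 \le k \le p-1$ yields $px = -x^2 g$ for the single element $g = \sum_{k=2}^p \binom{p}{k} x^{k-2} \in K_0(X)$. Substituting this relation into $p^m x = p^{m-1}(px) = -xg\,(p^{m-1}x)$ and inducting gives $p^m x = (-1)^m x^{m+1} g^m$ for all $m \ge 1$. Taking $m = d$ and invoking $x^{d+1} = 0$, I conclude $p^d x = 0$.

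Finally I check $x \ne 0$: applying the determinant homomorphism $\det \colon K_0(X) \to \Pic(X)$ (additive on short exact sequences, hence well defined on $K_0$) gives $\det(x) = L \otimes \OO_X^{-1} = L \ne \OO_X$, so $[L] \ne [\OO_X]$. Thus $x$ is a nonzero class annihilated by $p^d$; choosing the least $j$ with $p^j x = 0$, the element $p^{j-1} x$ is nonzero and killed by $p$, i.e.\ a nonzero $p$-torsion class in $K_0(X)$, contradicting the hypothesis. The whole argument after the nilpotency step is purely formal, so the one place to be careful is that nilpotency input; everything else is bookkeeping.
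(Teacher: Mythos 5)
Your proof is correct and follows essentially the same route as the paper: both arguments hinge on the nilpotency of $[L]-[\OO_X]$ via the multiplicative topological filtration and then extract $p$-torsion from the binomial expansion of $([L])^p=1$. The only differences are cosmetic bookkeeping --- the paper multiplies $pN=-N^2\alpha$ by $N^{k-2}$ (with $k$ the nilpotency index) to exhibit $N^{k-1}$ as the torsion class and uses $F^1/F^2\cong\Pic(X)$ to see $N\neq 0$, whereas you iterate to $p^dx=0$ and use the determinant homomorphism; both are standard and equally valid.
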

\begin{proof}
We prove that if $\Pic(X)$ has $p$-torsion, then the same is true for $K_0(X)$.

Let $L$ be a line bundle on $X$ such that $L^{\otimes p} \cong \O_X$.
Let $N = [L] - 1 \in K_0(X)$; then $N$ is nilpotent.  Indeed $N$ being
of rank zero, sits in the first term $F^1 K_0(X)$ of the topological
filtration on $K_0(X)$ (\cite{Fulton}, Example $15.1.5$).  The
topological filtration is multiplicative; therefore $N^{dim(X)+1} \in
F^{dim(X)+1}K_0(X) = 0$.

Let $k$ be the smallest positive integer such that $N^k=0$.  If $k=1$,
that is $N=0$ and $[L] = 1 \in K_0(X)$, then $L \cong \O_X$ since to
$F^1 X/F^2 X \cong \Pic(X)$ by \cite{Fulton}, Example $15.3.6$.

We assume now that $k \ge 2$.
We have
\[
~[L] \= 1+N.
\]
Taking $p$-th tensor power of both sides we obtain
\[\bal
1    & \= 1 + pN + N^2 \alpha, \; \alpha \in K_0(X) \\
0    & \= pN + N^2 \alpha
\eal\]
and after multiplying by $N^{k-2}$:
\[
pN^{k-1} = 0,
\]
so that $N^{k-1}$ is nontrivial $p$-torsion class in $K_0(X)$.
\end{proof}

\medskip

\begin{remark} \label{kahler}
In fact if we assume $X$ to be a compact K\"ahler manifold with a full exceptional collection in the analytic
derived category $\D^b_{an}(X)$ of complexes of $\O_X$-modules with bounded coherent cohomology,
one can show that $H^{2,0}(X,\C) = 0$, so that
the K\"ahler cone is open in $H^2(X,\R) = H^{1,1}(X,\C) \cap \overline{H^{1,1}(X,\C)}$
hence it has non-trivial intersection with $H^2(X,\Z)$.

Then the Kodaira embedding theorem implies that $X$ is projective.
\end{remark}

\medskip

\begin{definition}
We call a smooth projective complex variety of dimension $n$ admitting a full exceptional collection of length $n+1$
a {\it minifold}.  
\end{definition}

It follows from Proposition \ref{fec} (2), that $n+1$ is the minimal
number of objects in such a collection, and the term "minifold" originates from here.
Minifolds have the same Hodge numbers as projective spaces.
By results of Beilinson \cite{Be} and Kapranov \cite{Ka}, projective spaces and odd-dimensional
quadrics are minifolds.

\begin{lemma} \label{ample}
Let $X$ be a minifold.
Then
\begin{itemize}
\item either $X$ is a Fano variety i.e. the anticanonical line bundle
  $\omega_X^\vee = \det T_X$ is ample 
\item or canonical line bundle $\omega_X = \det T_X^*$ is ample
\end{itemize}
In particular, the variety $X$ is uniquely determined by $\Db(X)$.
\end{lemma}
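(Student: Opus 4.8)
The plan is to show that a minifold $X$ has Picard rank one, and then use the fact that the canonical class is a nonzero multiple of the ample generator to conclude that either $\omega_X$ or $\omega_X^\vee$ is ample. By Proposition \ref{fec}(2), $X$ is a $\Q$-homology projective space, so $H^{1,1}(X) = H^2(X,\R)$ is one-dimensional, and by Proposition \ref{fec}(3) the first Chern class map gives $\Pic(X) \cong H^2(X,\Z) \cong \Z$. Thus $\Pic(X)$ is generated by a single line bundle, and every line bundle is a power of the generator.

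First I would pin down a generator $\OO(1)$ of $\Pic(X) \cong \Z$ with the convention that the ample cone corresponds to the positive ray (one of the two generators is ample by the projectivity of $X$, which holds by hypothesis, or by Remark \ref{kahler} in the analytic setting). Next I would write $\omega_X = \OO(m)$ for some integer $m \in \Z$. The key observation is that $m \neq 0$: if $\omega_X$ were trivial, then $X$ would be a Calabi--Yau type variety with $H^{n,0}(X) \neq 0$, contradicting Proposition \ref{fec}(2), which forces $H^{p,q}(X) = 0$ for $p \neq q$ and in particular $h^{n,0}(X) = 0$; alternatively, a trivial canonical class would make $\OO_X$ and its Serre twist $\OO_X \otimes \omega_X = \OO_X$ violate the orthogonality needed for an exceptional collection of length $n+1$ by a Serre-duality argument. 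Either way $m \neq 0$, so $\omega_X$ is a nonzero multiple of the ample generator, hence is either ample (if $m > 0$) or anti-ample (if $m < 0$), which is exactly the dichotomy claimed.

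For the final sentence, that $X$ is uniquely determined by $\Db(X)$, I would invoke the Bondal--Orlov reconstruction theorem: if the canonical or anticanonical bundle is ample, then $X$ can be recovered from $\Db(X)$ as a variety, since the graded canonical ring $\bigoplus_l \Hom(\OO, \OO \otimes \omega_X^{\otimes l})$ (or its anticanonical analogue) is reconstructible intrinsically from the triangulated structure together with the Serre functor, and $X = \Proj$ of this ring. The dichotomy just established guarantees that one of these two ample cases always applies.

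The main obstacle I anticipate is the step ruling out $m = 0$ in a way that does not secretly assume Fano-ness. The Hodge-theoretic argument via $h^{n,0}(X) = 0$ is clean and relies only on Proposition \ref{fec}(2), so I would lead with that; the derived-categorical Serre-functor argument is a useful cross-check but requires care about whether the exceptional collection is strict. The remaining verifications---that $\Pic(X)$ has rank exactly one and that positivity of the multiple $m$ translates to (anti)ampleness of $\omega_X$---are routine given the Hodge-theoretic input and the identification of the ample cone as a half-line in $H^2(X,\R) \cong \R$.
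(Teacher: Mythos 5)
Your proof is correct and follows essentially the same route as the paper: both use Proposition \ref{fec}(2) to get $h^{n,0}(X)=0$, hence $\omega_X$ nontrivial, then Proposition \ref{fec}(3) and the one-dimensionality of $\Pic(X)_\Q \cong H^2(X,\Q)$ to conclude that $\omega_X$ or $\omega_X^\vee$ is ample, and finally Bondal--Orlov reconstruction for the last claim. The only cosmetic difference is that you work with the integral identification $\Pic(X)\cong\Z$ while the paper argues via torsion-freeness and $\Pic(X)_\Q\cong\Q$.
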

\begin{proof}
We first note that $\omega_X$ is not trivial, since $h^0(\omega_X) =
h^{n,0}(X) = 0$ by Proposition \ref{fec}(2). By Proposition
\ref{fec}(3), $\Pic(X)$ is torsion free; hence the class of $\omega_X$
in $\Pic(X)_\Q \cong H^{2}(X,\Q) = \Q$ is non-zero. Therefore either
$\omega_X$ or $\omega_X^\vee$ is ample.  Now the Bondal-Orlov
\cite{BO} reconstruction theorem implies the last statement.
\end{proof}

\medskip

\begin{remark} If we weaken the assumption from "projective" to
"proper" in the definition of a minifold, we still get the same
  class of varieties. Indeed, if $X$ is a proper smooth variety of dimension $n$
  with a full exceptional collection of length $n+1$ we can still
  deduce that $\omega_X$ or its dual is ample, in particular that $X$
  is projective as follows.

From \cite{LW}{(Theorem 3)} it follows that for a compact complex
$n$-dimensional manifold, the Chern number $c_1 c_{n-1}$ is determined
by Hirzebruch $\chi$-genera $\chi_y$ and hence by the Hodge numbers.

Thus we have $c_1 c_{n-1} [X] = c_1 c_{n-1} [\P^n] = \frac{n
  (n+1)^2}{2} \ne 0$.  Since the Kleiman-Mori cone of effective
one-cycles modulo numerical equivalence $N_1(X) \subset H^2(X,\R)$ is
one dimensional (that is because $H^2(X,\R)$ itself is one dimensional
by Proposition \ref{fec}(2) which still holds under the assumption
that $X$ is proper), Kleiman's criterion for ampleness implies that
either $\omega_X$ or its dual is ample.
\end{remark}

\medskip

The rest of this section is devoted to proof of Theorem \ref{mainthm}.
In view of Lemma \ref{ample}, the proof consists of
classifying Fano minifolds and showing that there is no minifolds
among varieties of general type.  

\medskip

We start in dimension $2$.  
The only del Pezzo surface with Picard number one is a projective plane.

On the other hand it is known that fake projective planes have
non-vanishing torsion first homology group \cite{PY07}, Theorem 10.1.
Hence by Proposition \ref{fec}(4) there is no minifold of general type
of dimension $2$.

\medskip

Let us consider Fano threefolds.  By Proposition \ref{fec}(2) 
conditions $b_2(X)=1$ and $b_3(X)=0$ are necessary for a minifold.
Such Fano threefolds were classified by Iskovskikh \cite{Isk} into
four deformation types: the projective space $\P^3$, the quadric
$Q^3$, the del Pezzo quintic threefold $V_5$, and a family of Fano threefolds
$V_{22}$.

All these varieties are known to admit an exceptional collection of
length $4$ by results of Beilinson, Kapranov, Orlov and Kuznetsov respectively
\cite{Be}, \cite{Ka}, \cite{OrV5}, \cite{KuV22}.

\medskip

It is easy to see that $3$-dimensional $\Q$-homology varieties of general type do not exist.
Indeed $K_X$ ample implies that $c_1(X)^3$ is negative, but by Todd's theorem $c_1(X) c_2(X) = 24$.
This contradicts to Yau's inequality $c_1(X)^3 \ge \frac83 c_2(X) c_1(X)$ \cite{Yau}.

\medskip

According to Wilson \cite{Wi} and Yeung \cite{Ye10}
there are three alternatives for a
$\Q$-homology projective fourspace $X$:
 either $X$ is $\P^4$,
 or $X$ is a fake projective fourspace,
or $X$ has Hilbert polynomial $\chi(\omega_X^{-l}) = 
1 + \frac{25}{8} l (l+1) (3 l^2 + 3 l + 2)$
and Chern numbers
 $[c_1^4, c_2 c_1^2, c_2^2, c_1 c_3, c_4] = [225, 150, 100, 50, 5]$.
In what follows the varieties of the latter type are named \emph{Wilson's fourfolds}.

There are some known examples of fake projective fourfolds,
but it is not known whether any Wilson's fourfold actually exist.

In what follows we show that (possibly non-existent) Wilson's fourfolds
do not satisfy conditions of Proposition \ref{fec}(5),
and hence do not admit a full exceptional collection.
In order to do that we relate
the Grothendieck group of a minifold
to its Hilbert polynomial.

\medskip

We need a simple Lemma from linear algebra.

\begin{lemma} \label{lemma-det}
Let $P(x) = \sum_{j=0}^n p_j x^j \in K[x]$ be a polynomial of degree $\le n$ with coefficients in a field $K$ of characteristic zero
and let $A_P$
be the $(n+1)\times(n+1)$-matrix
with coefficients $a_{i,j} = P (j-i)$.
Then we have
\[
det(A_P) = (n! \,p_n)^{n+1}.
\]
In particular the matrix $A_P$ is non-degenerate
if and only if $\deg P = n$.
\end{lemma}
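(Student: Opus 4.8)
The plan is to reduce $A_P$ to an anti-triangular matrix by unimodular row and column operations --- concretely, by repeatedly forming finite differences --- and then read off the determinant as the (signed) product of the anti-diagonal entries. The whole point will be that finite differencing is a unimodular operation that lowers the degree of a polynomial by exactly one while scaling its leading coefficient in a controlled way, so that in the end only $p_n$ survives.

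First I would index rows and columns by $0, \dots, n$, so that the $i$-th row of $A_P$ is the list of values $P(-i), P(1-i), \dots, P(n-i)$; equivalently it is the translated polynomial $P_i(x) := P(x-i)$ evaluated at $x = 0, 1, \dots, n$. I would then apply column operations implementing the forward difference operator $\Delta$, where $(\Delta Q)(x) = Q(x+1) - Q(x)$. Since $(\Delta^k Q)(0) = \sum_{j=0}^{k} (-1)^{k-j}\binom{k}{j} Q(j)$, these operations amount to multiplying $A_P$ on the right by a triangular matrix with unit diagonal; hence they are unimodular and leave the determinant unchanged. Because $P_i$ is a translate of $P$ one has $(\Delta^k P_i)(0) = (\Delta^k P)(-i)$, so the resulting matrix $A'$ has entries $A'_{i,k} = (\Delta^k P)(-i)$.

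Second I would note that, for fixed $k$, the entry $A'_{i,k}$ is a polynomial of degree $\le n-k$ in $i$, since $\Delta^k P$ has degree $\le n-k$. Applying the analogous unimodular finite-difference operations in the row index then produces a matrix $A''$ with $A''_{m,k} = (\Delta_i^m g_k)(0)$, where $g_k(i) := (\Delta^k P)(-i)$; this vanishes whenever $m > n-k$, i.e. whenever $m+k > n$, so $A''$ is anti-triangular. On the anti-diagonal $m+k=n$ the entry is the top finite difference of a degree-$(n-k)$ polynomial, namely $(n-k)!$ times its leading coefficient. Tracking the leading coefficient of $\Delta^k P$, which is $p_n\, n!/(n-k)!$, together with the sign coming from $i \mapsto -i$, gives $A''_{m,\,n-m} = (-1)^m\, n!\, p_n$.

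Finally, the determinant of an anti-triangular matrix is the sign $(-1)^{n(n+1)/2}$ of the reversal permutation times the product of the anti-diagonal entries. Combining this with $\prod_{m=0}^{n} (-1)^m = (-1)^{n(n+1)/2}$ and the fact that $n(n+1)$ is even yields $\det(A_P) = \det(A'') = (n!\, p_n)^{n+1}$. The last assertion is then immediate: the determinant is nonzero precisely when $p_n \ne 0$, that is when $\deg P = n$ (and if $\deg P < n$ one sees directly that the high-index columns of $A'$ already vanish). I expect the only genuine obstacle to be bookkeeping: keeping the two factorial/leading-coefficient computations and the several sign factors consistent. The conceptual content is just that finite differencing is unimodular and drops degree by exactly one, which forces the answer to depend on $p_n$ alone.
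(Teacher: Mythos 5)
Your argument is correct, but it takes a genuinely different route from the proof the paper prints. The paper first establishes the equivalence $\det(A_P)=0 \Leftrightarrow p_n=0$ (linear dependence, resp.\ independence, of the translates $P(x),P(x+1),\dots,P(x+n)$), then observes that $\det(A_P)$ is a homogeneous form of degree $n+1$ in $(p_0,\dots,p_n)$ whose zero locus is supported on the hyperplane $p_n=0$, hence equals $C_n\,p_n^{n+1}$; the constant is pinned down by evaluating on the single monic polynomial $P_0(x)=(x+1)\cdots(x+n)$, for which $A_{P_0}$ is upper triangular with diagonal entries $n!$. You instead perform explicit unimodular finite-difference operations on rows and columns to make $A_P$ anti-triangular and read off the determinant directly; your bookkeeping checks out: the leading coefficient of $\Delta^{n-m}P$ is $p_n\,n!/m!$, the substitution $i\mapsto -i$ contributes $(-1)^m$, the top $m$-th difference multiplies by $m!$, giving $A''_{m,n-m}=(-1)^m n!\,p_n$, and the reversal sign $(-1)^{n(n+1)/2}$ cancels against $\prod_{m=0}^n(-1)^m$. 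What each approach buys: the paper's argument is shorter but passes to $\bar K$ and uses a geometric fact about forms vanishing off a hyperplane, so it genuinely needs the field hypothesis; yours is a pure polynomial identity in the $p_j$ and so proves the determinant formula over any commutative ring, with characteristic zero needed only to conclude that $n!\,p_n\neq 0$ exactly when $\deg P=n$. (As it happens, the paper's source contains a commented-out alternative proof that is essentially your computation, with the unitriangular difference matrices written out via binomial coefficients.)
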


\begin{proof}
It suffices to prove the statement for algberaic closure $\bar{K}$ of $K$,
we thus assume $K$ to be algebraically closed.

We first prove that 
\beq\label{detAP}
det(A_P) = 0 \Longleftrightarrow p_n = 0.
\eeq
Indeed if $deg(P(x)) < n$, then $n+1$ polynomials $P(x), P(x+1), \dots, P(x+n)$ are
linearly dependent which makes the columns of $A_P$ linearly dependent, thus $det(A_P) = 0$.
On the other hand, it is easy to see that if $deg(P(x)) = n$, then
\[
P(x), P(x+1), \dots, P(x+n)
\]
form a basis of the space of polynomials of degree $\le n$, and $A_P$ is a matrix
of an invertible linear transformation $P \mapsto (P(0), P(-1), \dots, P(-n)) \in K^{n+1}$ in this basis,
hence $det(A_P) \ne 0$.

Let $F(p_0, p_1, \dots, p_n) = det(A_P)$. Since the entries of the matrix $A_P$ are linear forms in $p_0, p_1, \dots, p_n$,
it follows that $F$ is homogeneous in $p_i$'s of degree $n+1$.
Then (\ref{detAP}) says that the support of the degree $n+1$ hypersurface $F = 0$ in $\P^{n}$ is contained in the hyperplane $p_n = 0$.
Therefore 
\beq\label{FCn}
F(p_0, p_1, \dots, p_n)  = C_n \cdot p_n^{n+1}
\eeq
for some constant $C_n \in K$.
In particular $det(A_P)$ takes the same value $C_n$ for any monic polynomial $P(x)$ of degree $n$.

Let $P_0(x) = (x+1) \cdot (x+2) \cdot \dots \cdot (x+n)$. Then the matrix $A_{P_0}$ is
uppertriangular with all diagonal entries equal to $n!$:
\beq\label{Cn}
C_n = det(A_{P_0}) = (n!)^{n+1}
\eeq

The result now is the combination of (\ref{FCn}) and (\ref{Cn})
\end{proof}

\begin{proposition} \label{prop-reduction}
Let $X$ be a minifold.
Let $\O(1) = det(T_X)$ be the anticanonical bundle,
$\deg(X)$ be the anticanonical degree $c_1(X)^n$
and $P_X(k) = \chi(\O(k))$ be
the Hilbert polynomial.
Consider a sublattice $\Lambda \subset K_0(X)$ spanned by
\[
[\O], [\O(1)], \dots, [\O(n)].
\]

Then the Euler pairing restricted to $\Lambda$ is non-degenerate,
that is classes $[\O], [\O(1)], \dots, [\O(n)]$
are linearly independent in $K_0(X)$ and $\Lambda$ is a sublattice
in $K_0(X)$ of full rank.
Furthermore, $\Lambda$ admits a semi-orthonormal basis over
the ring $\Z[\frac{1}{deg (X)}]$
and hence modulo any prime $p$ that does not divide $deg(X)$.
\end{proposition}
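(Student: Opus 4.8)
The plan is to recognize the Gram matrix of the Euler pairing on the generators of $\Lambda$ as exactly the matrix $A_{P_X}$ of Lemma \ref{lemma-det}, and then to read off \emph{both} assertions from the single computation of its determinant, combined with the unimodularity of $K_0(X)$ furnished by Proposition \ref{fec}(5). First I would compute the entries. Since the $\O(i)$ are line bundles, $\Ext^k(\O(i),\O(j)) = H^k(X,\O(j-i))$, so
\[
\chi([\O(i)],[\O(j)]) = \chi(X,\O(j-i)) = P_X(j-i),
\]
and the Gram matrix of $[\O],\dots,[\O(n)]$ is precisely the $(n+1)\times(n+1)$ matrix $A_{P_X}$ with $a_{i,j}=P_X(j-i)$. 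By asymptotic Riemann--Roch the leading coefficient of $P_X$ is $\deg(X)/n!$, and $\deg(X)=c_1(X)^n\neq 0$ because, by Lemma \ref{ample}, either $\omega_X$ or $\omega_X^\vee$ is ample. Hence $\deg P_X=n$ and Lemma \ref{lemma-det} yields
\[
\det(A_{P_X}) = \bigl(n!\cdot \deg(X)/n!\bigr)^{n+1} = \deg(X)^{n+1}\neq 0.
\]
Non-degeneracy of the Gram matrix forces $[\O],\dots,[\O(n)]$ to be linearly independent in $K_0(X)$, so $\Lambda$ has rank $n+1$; as $K_0(X)$ also has rank $n+1$ by Proposition \ref{fec}(5), $\Lambda$ is of full rank, which settles the first assertion.

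For the second assertion I would deliberately avoid orthonormalizing inside $\Lambda$ itself: a naive Gram--Schmidt runs into trouble precisely at the normalization of the diagonal entries to $1$, which in general needs both the inversion of $\deg(X)$ and a square root. Instead I would set $R=\Z[\tfrac{1}{\deg(X)}]$ and compare $\Lambda$ with the ambient lattice $K_0(X)$. By Proposition \ref{fec}(5) the Euler form on $K_0(X)$ is unimodular over $\Z$ and the classes $[E_i]$ of the exceptional objects form a semi-orthonormal $\Z$-basis. Comparing discriminants, the index $d=[K_0(X):\Lambda]$ satisfies $d^2=|\det(A_{P_X})|=\deg(X)^{n+1}$, so $d=|\deg(X)|^{(n+1)/2}$; every prime dividing $d$ divides $\deg(X)$, whence $d$ is a unit of $R$. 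Therefore the finite group $K_0(X)/\Lambda$ is annihilated by a unit of $R$, and tensoring the exact sequence $0\to\Lambda\to K_0(X)\to K_0(X)/\Lambda\to 0$ with the flat $\Z$-algebra $R$ produces an isomorphism $\Lambda\otimes_\Z R\cong K_0(X)\otimes_\Z R$ compatible with the Euler pairings. The images of the $[E_i]$ then give a semi-orthonormal $R$-basis of $\Lambda\otimes_\Z R$, and reducing along $R\to\F_p$ for any $p\nmid\deg(X)$ gives a semi-orthonormal $\F_p$-basis of $\Lambda\otimes\F_p$.

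The one genuinely load-bearing step is the determinant evaluation $\det(A_{P_X})=\deg(X)^{n+1}$: it simultaneously produces non-degeneracy in the first part and, through the index identity $d=|\deg(X)|^{(n+1)/2}$, makes $d$ invertible in $R$, which is exactly what lets me identify $\Lambda$ with $K_0(X)$ after inverting $\deg(X)$. The point I expect to require the most care is checking that $d$ really is a unit of $R$, equivalently that $|\deg(X)|^{(n+1)/2}$ is an integer; this is automatic here since $d=[K_0(X):\Lambda]$ is an integer by construction, which incidentally forces $\deg(X)$ to be a perfect square whenever $n$ is even.
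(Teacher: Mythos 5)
Your proposal is correct and follows essentially the same route as the paper: identify the Gram matrix of $[\O],\dots,[\O(n)]$ with the matrix $A_{P_X}$ of Lemma \ref{lemma-det}, obtain $\det(A_{P_X})=\deg(X)^{n+1}\neq 0$ from the leading coefficient $\deg(X)/n!$, and then compare $\Lambda$ with the unimodular lattice $K_0(X)$ after inverting $\deg(X)$ (the paper phrases this via the change-of-basis matrix $G$ with $\det(G)^2=\pm\det(A_{P_X})$, which is exactly your index identity $d^2=|\deg(X)|^{n+1}$). If anything, your write-up is slightly more careful than the paper's, which misstates $\det(A_X)$ as $\deg(X)$ rather than $\deg(X)^{n+1}$ at one point before using the correct value later.
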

\begin{proof}

Let $A_X$ denote the matrix of the pairing on $\Lambda$, that is a matrix with entries $a_{i,j} = \chi(\O(i), \O(j)) = P_X(j-i)$.

We apply Lemma \ref{lemma-det} to $P = P_X$, the Hilbert polynomial.
Its top coefficient is equal to $p_n = \frac{\deg(X)}{n!}$;
therefore $det(A_X) = deg(X) \ne 0$ is the anticanonical degree
and the pairing on $\Lambda$ is non-degenerate.

The inclusion $\Lambda \subset K_0(X)$ becomes an isomorphism
after inverting $det(A_X) = deg(X)$. Indeed
let $e_j, j=0, \dots n$ be a basis in $K_0(X)$
and write
\[
[\O(i)] = \sum G_{j,i} e_j, \; 0\leq i\leq n.
\]
The matrix $G^{-t} \, A_X \, G^{-1}$ is unimodular,
hence $deg(X) = det(G)^2$, and after
inverting $deg(X)$, $G$ becomes invertible.

Since $K_0(X)$ admits a semiorthonormal basis by assumption and Proposition \ref{fec}(5),
the same holds for $\Lambda \otimes \Z[\frac1{deg(X)}]$
\end{proof}

\medskip

Let $P_X$ be the Hilbert polynomial of Wilson's fourfolds
and $A_X$ be the $5\times 5$-matrix ${(A_X)}_{i,j} = P_X(j-i)$.
Consider their residues modulo two: $\overline{A}_X = A_X\mod 2$, 
${(\overline{A}_X)}_{i,j} = \overline{P}_X(j-i)$.

In the Proof of Proposition \ref{prop-reduction} we showed
that the determinant of matrix $A_X$ equals $deg(X)^{n+1} = 225^5=15^{10}$,
hence the assumption that $X$ is a minifold would imply
that $A_X$ admits a semiorthonormal basis modulo all primes $p \ne 3, 5$,
in particular this would imply that $\overline{A}_X$ has a semiorthonormal basis.

Entries of $A_X$ and $\overline{A}_X$ are determined by values $P(n)$ for $0\leq n \leq 4$ (that we tabulate)
and Serre duality $P(n) = P(-1-n)$:

\begin{center}
\begin{tabular}{|l|c|c|c|c|c|}
\hline
n   & 0 &  1 &   2 &    3 &    4 \\
\hline
$P(n)$ & 1 & 51 & 376 & 1426 & 3876 \\
\hline
$P(n)\mod 2$ & 1 & 1 & 0 & 0 & 0 \\
\hline
\end{tabular}
\end{center}

\[
\overline{A}_X 
= \begin{pmatrix}
 1&1&0&0&0\cr
 1&1&1&0&0\cr
 1&1&1&1&0\cr
 0&1&1&1&1\cr
 0&0&1&1&1\cr
\end{pmatrix}.
\]

The following Lemma gives a contradiction, from which we see that a Wilson fourfold $X$ can not be a minifold.

\begin{lemma}\label{a2}
Let $(u,v)\mapsto u^t \overline{A}_X v$ be the bilinear form on a vector space $V = \F_2^5$ given
by the matrix $\overline{A}_X$.
There is no basis $e_1,e_2,e_3,e_4,e_5$ of $V$ such that $(e_i,e_j)=0$ for $i>j$ and $(e_i,e_i)=1$.
\end{lemma}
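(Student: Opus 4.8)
The plan is to restate the conclusion as a statement about $\F_2$-congruence and then to separate $\overline{A}_X$ from every upper-unitriangular matrix by an invariant of the associated quadratic form. A basis $e_1,\dots,e_5$ with $(e_i,e_j)=0$ for $i>j$ and $(e_i,e_i)=1$ is the same datum as an invertible $M\in GL_5(\F_2)$, whose columns are the $e_i$, for which $G:=M^t\overline{A}_X M$ is upper triangular with all diagonal entries equal to $1$. So I must show that $\overline{A}_X$ is not $\F_2$-congruent to any such upper-unitriangular $G$.

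The key point is that congruence $A\mapsto M^tAM$ replaces the quadratic function $q(v)=v^t\overline{A}_X v$ by $q\circ M$, so any isometry invariant of $q$ is automatically a congruence invariant. First I would record the linear-algebra invariants of $q$: its polarization is the alternating form $\beta(u,v)=u^t(\overline{A}_X+\overline{A}_X^t)v$, and a direct computation shows $\beta$ has rank $4$, with one-dimensional radical spanned by the vector $w$ supported in positions $1$ and $5$, and that $q(w)=0$. Hence $q$ descends to a nondegenerate quadratic form $\bar q$ on the four-dimensional symplectic space $\F_2^5/\langle w\rangle$, and I would pin down its Arf invariant most cheaply through the Gauss sum $\sum_{v\in\F_2^5}(-1)^{q(v)}$, which for $\overline{A}_X$ separates over the variable blocks $\{v_1,v_3,v_5\}$ and $\{v_2,v_4\}$ and evaluates to $+8$; equivalently $\#\{v:q(v)=0\}=20$ and $\mathrm{Arf}(\bar q)=0$.

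It then remains to show that no upper-unitriangular $G$ can match these invariants. For such a $G$ the diagonal of the Gram matrix is forced to be all ones, so $q_G(v)=\sum_i v_i+\sum_{i<j}G_{ij}v_iv_j$, and $q_G$ takes the value $1$ on every standard basis vector. A Gauss sum of $+8$ can only arise when $G+G^t$ has rank $4$ and $q_G$ vanishes on its radical, and under exactly those constraints I would show the Gauss sum is forced to be $-8$ (equivalently $\mathrm{Arf}=1$), contradicting the value $+8$ for $\overline{A}_X$. The clean way to see this is to peel the linear part $\ell(v)=\sum_i v_i$ off $q_G$: when the radical vector has even weight one has $\ell=\beta_G(\,\cdot\,,v_0)$ for some $v_0$, and completing the square rewrites the Gauss sum as $(-1)^{Q_G(v_0)}\sum_v(-1)^{Q_G(v)}$, where $Q_G(v)=\sum_{i<j}G_{ij}v_iv_j$ is the pure graph form; one then checks the resulting sign is always negative.

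The main obstacle is precisely this last step: establishing uniformly, for every upper-unitriangular $G$ whose polarization has rank $4$ and on whose radical $q_G$ vanishes, that the sign is $-1$. The difficulty is that, unlike the rank or the vanishing-on-the-radical condition, the Arf invariant is a genuinely quadratic invariant that must be controlled for the whole family at once, not just for $\overline{A}_X$. I expect to dispatch it through the graph-form reduction above — analysing the odd-neighbourhood vector $v_0$ forced by the all-ones diagonal, and noting that the natural $\mathrm{Arf}=0$ configurations (two $\beta$-paired standard basis vectors each with $q_G=1$) are exactly the ones excluded by requiring $q_G$ to vanish on the radical — or, failing an elegant argument, by the finite check over the $2^{10}$ strictly-upper-triangular parameters.
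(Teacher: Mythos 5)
Your reduction is correct as far as it goes: a semi-orthonormal basis is the same datum as an invertible $M$ with $M^t\overline{A}_XM$ upper unitriangular, and your computations of the invariants of $q(v)=v^t\overline{A}_Xv$ (polarization of rank $4$ with radical spanned by $w=(1,0,0,0,1)^t$, $q(w)=0$, Gauss sum $\sum_v(-1)^{q(v)}=+8$, Arf $=0$) are all accurate. The fatal problem is the final step, and it is not merely difficult but false. The quadratic function attached to $\overline{A}_X$ is
\[
q(v)=v_1+v_2+v_3+v_4+v_5+v_1v_3+v_2v_4+v_3v_5,
\]
and this is \emph{identical} to the quadratic function $v^tGv$ of the upper unitriangular matrix $G=I+E_{13}+E_{24}+E_{35}$, since $\overline{A}_X+G$ is symmetric with zero diagonal. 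This $G$ satisfies every constraint you impose --- $G+G^t$ is the same rank-$4$ alternating form with the same radical, and $q_G$ vanishes on that radical --- yet its Gauss sum is $+8$, not $-8$. So the uniform sign claim in your last paragraph fails, and the proposed brute force over the $2^{10}$ strictly-upper-triangular parameters would simply turn up this counterexample.

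The obstruction is structural: $v^tAv$ only sees $A$ modulo alternating matrices, so congruence of the non-symmetric matrix $\overline{A}_X$ is a strictly finer relation than isometry of its quadratic function, and the semi-orthonormality condition $(e_i,e_j)=0$ for $i>j$ constrains the individual entries $e_i^t\overline{A}_Xe_j$ rather than their symmetrizations. No invariant of $q$ alone can therefore prove the lemma. The paper's proof uses the full bilinear form: it introduces the operator $S=\overline{A}_X^{-1}\overline{A}_X^{\,t}$ (the mod-$2$ Serre operator, which is \emph{not} determined by $q$), shows the twelve vectors with $(x,x)=1$ split into two $S$-orbits of lengths $8$ and $4$, normalizes a hypothetical basis using mutations, and reaches a contradiction by inspecting the $12\times12$ Gram matrix of those vectors. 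To salvage your approach you would need an invariant sensitive to the asymmetric part of $\overline{A}_X$, e.g.\ the conjugacy class of $S$ or an orbit count as in the paper.
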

\begin{proof}

We begin by making a few remarks.

\begin{enumerate}
\item Let $S:=\overline{A}_X^{-1} \overline{A}_X^t$ be an automorphism of $V$.
In fact $S$ is induced by the Serre functor $\mathcal{S}_X = \otimes \omega_X [\dim X]$ on $\Db(X)$ \cite{Bon90, BK}.
$S$ satisfies $(u,v)=(v,S u)$ for all $u,v$,
so it preserves $\overline{A}_X$, i.e. $(u, v) = (Su, Sv)$,
equivalently $S^t \overline{A}_X S = \overline{A}_X$. We have
$$
S=\begin{pmatrix}
 1&1&0&0&0\cr
 0&0&1&0&0\cr
 0&0&0&1&0\cr
 1&0&0&0&1\cr
 1&0&0&0&0\cr
\end{pmatrix}
$$
and $S$ has order $8$ because the value of $P(n)\mod2$ depends only on $n\mod8$.
\item There are precisely $12$ vectors $x$ such that $(x,x)=1$.
Indeed $(x,x)=1$ if and only if the point $x$ does not lie on quadric $Q = \{x | (x,x)=0 \}$.
The quadric $Q$ has a unique singular point in $\P(V)$ so it has $19$ points
over $\F_2$ and its complement has $12$ points.
These twelve points form two orbits under the action of $S$. One orbit of length $8$ is generated by $a_1:=(1,0,0,0,0)^t$, another orbit of length $4$ is generated by $b_1:=(1,0,1,0,0)^t$.
\item If a basis $e_1,e_2,\ldots,e_5$ is semi-orthonormal, then for each $i$ ($1\leq i\leq 4$) the basis obtained by replacing $e_i, e_{i+1}$ with $e_{i+1}, e_i+e_{i+1} (e_i,e_{i+1})$ is also semi-orthonormal.
This transformation corresponds to mutations of exceptional collections \cite{Bon90,BK}.
\end{enumerate}

Denote $a_i=S^{i-1} a_1$, $b_i=S^{i-1} b_1$ and $c=(a_1,\ldots,a_8,b_1,\ldots,b_4)$. The following matrix has $(c_i,c_j)$ on position $i,j$:
$$
\left(\begin{array}{ccccccc|c|cccc}
 1&1&1&0&0&0&0&1&1&0&0&1\cr
 1&1&1&1&0&0&0&0&1&1&0&0\cr
 0&1&1&1&1&0&0&0&0&1&1&0\cr
 0&0&1&1&1&1&0&0&0&0&1&1\cr
 0&0&0&1&1&1&1&0&1&0&0&1\cr
 0&0&0&0&1&1&1&1&1&1&0&0\cr
\hline
 1&0&0&0&0&1&1&1&0&1&1&0\cr
 1&1&0&0&0&0&1&1&0&0&1&1\cr
\hline
 0&1&1&0&0&1&1&0&1&1&1&1\cr
 0&0&1&1&0&0&1&1&1&1&1&1\cr
 1&0&0&1&1&0&0&1&1&1&1&1\cr
 1&1&0&0&1&1&0&0&1&1&1&1\cr
\end{array}\right)
$$

\medskip

Assume there exists a semi-orthonormal basis. Then all of its vectors must be from the set $\{a_i\}\cup\{b_i\}$. Since there are only $4$ vectors in $\{b_i\}$, at least one of the basis vectors must be from $\{a_i\}$. Applying $S$ if necessary we may assume that this vector is $a_1$. Applying the transformation (3) we can obtain a semi-orthonormal basis with $a_1$ on the first position.

Any remaining basis vector $x$ must satisfy $(x,a_1)=0$.
Looking at the first column of the matrix of $(c_i,c_j)$ we see that the remaining basis vectors must be from the set $\{a_3,a_4,a_5,a_6,b_1,b_2\}$. Let $x$ be the second basis vector. Then any vector $y$ out of the remaining $3$ basis vectors must satisfy $(y,x)=0$. However, trying for $x$ each of the $\{a_3,a_4,a_5,a_6,b_1,b_2\}$ we see that there are only $2$ choices remaining for $y$. This is a contradiction.
\end{proof}


\medskip

We also can prove that there is no minifolds among arithmetic fake projective fourspaces.
This goes similarly to dimension $2$ case:
Prasad and Yeung proved that for an arithmetic fake projective fourspace
the first homology group $H_1(X,\Z)$ is non-zero
\cite{PY09}, Theorem $4$.
Therefore by Proposition \ref{fec}(3) these fourfolds are not minifolds.

\section{Phantoms in fake projective spaces}

Fake projective spaces seem to be very similar
and yet very different from ordinary projective spaces.
We propose the following conjecture.

\begin{conjecture} \label{conj-collections}
Assume that $X$ is an $n$-dimensional fake projective space with canonical class divisible
by $(n+1)$.
Then for some choice of $\O(1)$ such that $\omega_X = \O(n+1)$,
the sequence
\[
\OO, \OO(-1), \dots, \OO(-n)
\]
is an exceptional collection on $X$.
\end{conjecture}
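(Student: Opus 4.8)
The plan is to reduce the assertion that $\OO, \OO(-1), \dots, \OO(-n)$ is an exceptional collection to a single cohomology-vanishing statement, and then to establish that vanishing. Unwinding the definition, I first observe that each $\OO(-i)$ is automatically exceptional: $\Hom(\OO(-i), \OO(-i)[k]) = H^k(X, \OO)$, which is $\C$ for $k=0$ and $0$ for $k>0$, since a fake projective space has the Hodge numbers of $\P^n$ and hence $h^{0,k}(X)=0$ for $k>0$. For $j>i$ one has $\Hom(\OO(-j), \OO(-i)[k]) = H^k(X, \OO(j-i))$ with $1 \le j-i \le n$, so the entire statement is equivalent to
\[
H^k(X, \OO(m)) = 0 \quad\text{for all } k \text{ and all } 1 \le m \le n.
\]

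Next I would verify that the Euler characteristics already vanish. Writing $P_X(m) = \chi(X, \OO(m))$ and using $\omega_X = \OO(n+1)$, the fake projective space condition $\chi(X, \omega_X^{\otimes l}) = \chi(\P^n, \omega_{\P^n}^{\otimes l})$ reads $P_X((n+1)l) = \binom{n-(n+1)l}{n}$ for every $l \in \Z$. The right-hand side is the value at $(n+1)l$ of the degree-$n$ polynomial $\binom{n-m}{n}$, so $P_X(m)$ and $\binom{n-m}{n}$, both of degree $\le n$, agree at infinitely many points and therefore coincide. Since $\binom{n-m}{n}$ has roots precisely at $m = 1, \dots, n$, we obtain $\chi(X, \OO(m)) = 0$ in the required range. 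This is the numerical shadow already recorded in Proposition \ref{prop-reduction}, where $\chi(\OO(-j), \OO(-i)) = P_X(j-i)$ vanishes for $j > i$; the conjecture asks to lift this numerical semi-orthogonality to the vanishing of all the $\Hom$-spaces.

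The remaining and essential task is to upgrade $\chi = 0$ to the vanishing of the individual groups. Serre duality gives $H^k(X, \OO(m)) \cong H^{n-k}(X, \OO(n+1-m))^*$, so the interval $1 \le m \le n$ and the cohomological degree are exchanged by $(m,k) \mapsto (n+1-m, n-k)$; in particular $H^n(\OO(m))$ is dual to $H^0(\OO(n+1-m))$. Crucially, Kodaira vanishing is powerless in exactly this range: for $1 \le m \le n$ the bundle $\OO(m) \otimes \omega_X^{-1} = \OO(m-n-1)$ is anti-ample, so positivity forces no group to vanish, and a priori every $H^k$ could be nonzero subject only to $\chi = 0$. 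The \emph{fakeness} of $X$ must therefore enter here, and this is where I expect the genuine difficulty.

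The main obstacle is thus the vanishing of $H^0(X, \OO(m))$ for the ample bundles $\OO(m)$, $1 \le m \le n$ (and, for $n \ge 3$, of the intermediate groups as well): an ample bundle ordinarily has sections, so one must exploit that $X$ is fake to kill them. My approach would be to bring in automorphisms. I would choose a finite $G \subset \Aut(X)$ and a $G$-linearization of $\OO(1)$, and apply the holomorphic Lefschetz fixed point formula to a suitable $g \in G$ with only isolated fixed points. Its right-hand side, a sum over $\mathrm{Fix}(g)$ of local terms $w_p(m)\,/\,\det\bigl(1 - g^{-1}|_{T_{X,p}}\bigr)$ with $w_p(m)$ the weight of $g$ on the fibre $\OO(m)_p$, computes the character $\sum_k (-1)^k \Tr\bigl(g \mid H^k(X, \OO(m))\bigr)$ of the virtual representation $\sum_k (-1)^k [H^k(X, \OO(m))]$. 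Evaluating this over the powers of $g$ pins down the virtual representation, and combining it with $\chi = 0$ and the fact that $H^0$ is a genuine (non-virtual) subrepresentation of $G$, a character count forces $H^0 = 0$; then $H^n = 0$ by Serre duality and the intermediate groups follow. This is precisely the mechanism of Theorem \ref{theorem-fpp-G21}, where $n = 2$, $G \supset \Z/7$, and the order-$7$ element has exactly three fixed points. Producing such a symmetry — or an alternative source of vanishing — for an arbitrary fake projective space is the hard point, which is why the statement is posed as a conjecture rather than a theorem.
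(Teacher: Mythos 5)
The statement you were given is a conjecture, and the paper offers no general proof of it; your reduction to the vanishing of $H^k(X,\OO(m))$ for $1\le m\le n$, the verification that the Euler characteristics already vanish there, and the holomorphic-Lefschetz strategy for upgrading $\chi=0$ to actual vanishing in the presence of a finite automorphism group are exactly what the paper records in Remark \ref{rem-collections} and carries out for $n=2$ in the proof of Theorem \ref{theorem-fpp-G21}. You correctly locate the genuinely open content in the vanishing of $H^0(X,\OO(m))$ on the ample range $1\le m\le n$, so your proposal takes essentially the same approach as the paper and, like the paper, stops short of a proof of the general statement.
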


We call an non-zero admissible subcategory $\AA \subset \D^b(X)$ an {\it\bfseries $H$-phantom} if $HH_\bullet(\AA) = 0$
and a {\it\bfseries $K$-phantom} if $K_0(\AA) = 0$.

\begin{corollary} \label{conj-phantoms}
Fake projective spaces as in Conjecture admit an $H$-phantom admissible
subcategories in their derived categories $\D^b(X)$. 
\end{corollary}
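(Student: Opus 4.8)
The plan is to take $\AA$ to be the orthogonal complement of the exceptional collection supplied by Conjecture \ref{conj-collections} and to verify directly that its Hochschild homology vanishes while $\AA$ itself is nonzero. First I would invoke the Conjecture to obtain the collection $\OO, \OO(-1), \dots, \OO(-n)$, let $\mathcal{E} = \langle \OO, \OO(-1), \dots, \OO(-n) \rangle$ be the admissible subcategory it generates, and let $\AA$ be the complementary admissible subcategory, so that $\Db(X) = \langle \AA, \mathcal{E} \rangle$ (the orthogonal to an admissible subcategory is again admissible, and a subcategory generated by an exceptional collection is admissible). It then remains to prove the two assertions $HH_\bullet(\AA) = 0$ and $\AA \neq 0$.

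For the vanishing, the key input is that a fake projective space has the same Hodge numbers as $\P^n$, namely $h^{p,q}(X) = \delta_{pq}$ for $0 \le p, q \le n$. Feeding this into the Hochschild--Kostant--Rosenberg isomorphism $HH_i(\Db(X)) \cong \bigoplus_{p-q=i} H^{p,q}(X)$ used in the proof of Proposition \ref{fec}, all off-diagonal terms drop out, so $HH_\bullet(\Db(X))$ is concentrated in degree $0$ with total dimension $n+1$. On the other side, iterating the additivity $HH_i(\langle \AA, \BB \rangle) = HH_i(\AA) \oplus HH_i(\BB)$ over the $n+1$ exceptional objects --- each of which generates a copy of $\Db(\mathrm{pt})$ contributing $\C$ in degree $0$ --- gives that $HH_\bullet(\mathcal{E})$ is also concentrated in degree $0$ with total dimension $n+1$. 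Comparing the two computations via additivity applied to $\Db(X) = \langle \AA, \mathcal{E} \rangle$ forces $HH_\bullet(\AA) = 0$ degree by degree.

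For nonvanishing of $\AA$ I would show that the collection cannot be full, i.e. $\mathcal{E} \subsetneq \Db(X)$. The cleanest route is through torsion in Grothendieck groups: the classes $[\OO(-i)]$ span a free subgroup of $K_0(X)$, so if $\mathcal{E}$ were all of $\Db(X)$ then $K_0(X)$ would itself be free, whereas $K_0(X)$ in fact has torsion. Indeed a fake projective space carries torsion in $H^2(X,\Z) \cong \Pic(X)$ (for planes this is the nontrivial torsion first homology of Prasad--Yeung, transported to $H^2$ by Universal Coefficients and identified with $\Pic$ via the exponential sequence together with the Hodge vanishing of Proposition \ref{fec}(2)), and by the contrapositive of Lemma \ref{lemma-pic} torsion in $\Pic(X)$ forces torsion in $K_0(X)$. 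This contradiction shows the collection is not full, hence $\AA \neq 0$.

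The main obstacle is precisely this last point. The Hochschild computation is purely formal once HKR and additivity of $HH_\bullet$ under semiorthogonal decompositions are in hand, but the nonvanishing of $\AA$ rests on the external fact that fake projective spaces carry torsion in $\Pic$ (equivalently nontrivial torsion in $H_1$). This is exactly the cohomological subtlety that distinguishes fake projective spaces from genuine ones: it is established for fake projective planes (and arithmetic fourspaces) but in higher dimension it is the input that must be supplied alongside Conjecture \ref{conj-collections} in order to guarantee a genuine, nonzero phantom rather than an accidentally full collection.
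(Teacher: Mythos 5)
Your construction of $\AA$ and the Hochschild computation are exactly the paper's: additivity of $HH_\bullet$ under semiorthogonal decompositions, combined with HKR and the $\P^n$-Hodge numbers, gives $\dim HH_\bullet(\AA)=0$ in one step, so that half is fine. The divergence, and the gap, is in how you show $\AA\neq 0$, i.e.\ that the collection is not full. You do this by producing torsion in $K_0(X)$ from torsion in $\Pic(X)\cong H^2(X,\Z)$ via the contrapositive of Lemma \ref{lemma-pic}, which in turn you feed from nontrivial torsion in $H_1(X,\Z)$. But torsion in $H_1$ is not among the hypotheses of Conjecture \ref{conj-collections} nor part of the definition of a fake projective space: it is a theorem of Prasad--Yeung \cite{PY07} for fake projective planes and for the arithmetic fake fourspaces, and is simply unknown in general. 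So your argument establishes the corollary only for those fake projective spaces where this extra cohomological input happens to be available, not for ``fake projective spaces as in the Conjecture.'' You flag this yourself, but it means the proof as written does not cover the stated scope.

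The paper closes this hole by using the general-type hypothesis, which \emph{is} built into the definition of a fake projective space, and gives two unconditional reasons the collection cannot be full: (i) by Bondal--Polishchuk \cite{BP} (Theorem 3.4) or Positselski \cite{Pos}, if $\OO,\OO(-1),\dots,\OO(-n)$ were a full exceptional collection then $X$ would have to be Fano, contradicting $\omega_X$ ample; (ii) by Kuznetsov's height machinery \cite{Ku12}, Kodaira vanishing forces $\Ext^k(\OO(-i),\OO(-j))$ to vanish for $i<j$ unless $k=n$, so the pseudoheight of the collection equals its height $n-1$ and hence $HH^0(\AA)=HH^0(X)\neq 0$ for $n>1$. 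Either of these replaces your torsion step and makes the corollary a genuine consequence of the Conjecture alone; your $K_0$-torsion observation is still worth keeping, since in the cases where it applies it upgrades the conclusion toward a $K$-phantom-type statement rather than merely an $H$-phantom.
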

\begin{proof}
Assume that $\OO, \OO(-1), \dots, \OO(-n)$ is an exceptional collection,
and consider its right orthogonal $\AA$.
By results of Bondal and Kapranov \cite{Bon90,BK} the category $\AA$ is admissible, and thus we have
a semi-orthogonal decomposition:
\[
\D^b(X) = \left< \OO, \OO(-1), \dots, \OO(-n) , \AA \right>.
\]
Note that 
this exceptional collection could not be full at least for two reasons:
\begin{itemize}
\item
if $\OO(i)$ would be a full collection then 
by \cite{BP}(Theorem 3.4) or \cite{Pos}(see proof of main theorem)
manifold $X$ would be Fano, which contradicts to general type assumption,
\item
use Corollary 4.6 and Proposition 4.7 of \cite{Ku12}:
by Kodaira vanishing for $i < j$ space $Ext^k(\OO(-i),\OO(-j))$ vanish unless $k=n$,
so relative height of any two objects in a helix $\OO(i)$ equals $n$,
thus pseudoheight of the collection coincides with its height and is equal to $n-1$,
hence Hochschild cohomology $HH^0(\AA) = HH^0(X) \neq 0$ for $n>1$.
\end{itemize}

Finally, Hochschild homology is additive for semi-orthogonal
decompositions (cf the alternative proof of \ref{fec}(2)), so $\dim HH_\bullet(\AA) = 0$ that is $\AA$ is an $H$-phantom.
\end{proof}

\begin{remark}\label{rem-collections}
1. A statement analogous to Conjecture \ref{conj-collections} holds for some fake del Pezzo surfaces
of degrees one \cite{BBS,BBKS}, six \cite{AO} and eight \cite{GS,Lee}.
Here we add degrees three (Remark \ref{fake-cubics}) and nine (Theorem \ref{theorem-fpp-G21}).

2. Fake projective planes with properties as in Conjecture \ref{conj-collections}
are constructed in \cite{PY07}, 10.4. Choose $\OO(1)$ such that $\OO(3) = \omega_X$.
Then by the Riemann-Roch theorem the Hilbert polynomial is given by
\[
\chi(\O(k)) = \frac{(k-1)(k-2)}{2}.
\]

Therefore the collection $E_\bullet = (\OO, \OO(-1), \OO(-2))$ is at least numerically exceptional,
that is
\[
\chi(E_j, E_i) = 0, \; j > i.
\]
In addition we have
\[
H^0(S, \O(1)) = H^0(S, \O(3)) = 0.
\]
Furthermore it follows from Serre duality
that a necessary and sufficient condition for
$E_\bullet$ to be exceptional is vanishing of the space of the global sections $H^0(S, \O(2))$.
It is not hard to see that for all fake projective planes $h^0(S, \OO(2)) \le 2$ (cf end of the Proof
of Theorem \ref{theorem-fpp-G21}).

\medskip

3. More generally our definition of an $n$-dimensional fake projective space includes that its Hilbert polynomial
is the same as that of a $\P^n$.
It follows that if we assume $\omega_X = \O(n+1)$, then we have
\[
\chi(\O(k)) = (-1)^n \frac{(k-1)(k-2) \dots (k-n)}{n!},
\]
so that $k = 1, \dots, n$ are the roots of $\chi$, and the collection
\[
\OO, \OO(-1), \dots, \OO(-n)
\]
is numerically exceptional.

4. G.Prasad and S.-K. Yeung informed us that the assumption $\omega_X = \O(5)$ is known to be true
for the four arithmetic fake projective fourspaces constructed in \cite{PY09}.

\end{remark}

We now prove Theorem \ref{theorem-fpp-G21},
which shows that conjecture \ref{conj-collections} holds
for fake projective planes admitting
an action of the non-abelian group $G_{21}$ of order $21$.

According to the Table given in the Appendix there are $6$ such
surfaces: there are three relevant groups in the table
and there are two complex conjugate surfaces for each group \cite{KK}.


We first prove a general fact about fake projective planes.

\begin{lemma}\label{lemma-Kdiv}
Let $S$ be a fake projective plane with no $3$-torsion in $H_1(S, \Z)$.
Then there exists a unique (ample) line bundle $\OO(1)$ such that
$K_S \cong \OO(3)$. 
\end{lemma}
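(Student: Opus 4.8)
The plan is to describe $\Pic(S)$ explicitly, locate $K_S$ inside it, and then extract a cube root using the absence of $3$-torsion. Since a fake projective plane has $q = h^{0,1}(S) = 0$ and $p_g = h^{0,2}(S) = 0$, the exponential sequence identifies $\Pic(S) \cong H^2(S,\Z)$ via the first Chern class. By the Universal Coefficient Theorem this group splits (non-canonically) as $\Pic(S) \cong \Z\cdot H \oplus T$, where $T = H_1(S,\Z)^{tors}$ and $H$ generates the free part. Poincar\'e duality makes the intersection form on the free part unimodular, and the Hodge index theorem together with $h^{1,1}(S) = 1$ makes it positive definite, so $H^2 = 1$ for a suitable ample generator $H$.

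Next I would pin down the free part of $K_S$. Because $S$ is a minimal surface of general type, $K_S$ is ample, and since $S$ is a $\Q$-homology plane we have $K_S^2 = c_1(S)^2 = 9$. Writing $K_S \equiv d\,H$ modulo torsion gives $d^2 H^2 = d^2 = 9$, and ampleness of both $K_S$ and $H$ forces $d = 3$. Thus $K_S = 3H + t$ for a unique torsion class $t \in T$.

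The crux---and the only place the hypothesis enters---is to produce a cube root of $t$ inside $T$. Since $T$ is finite and has no $3$-torsion, multiplication by $3$ is an injective, hence bijective, endomorphism of $T$; let $s \in T$ be the unique element with $3s = t$. Setting $\OO(1) := H + s$ yields $\OO(3) = 3H + 3s = 3H + t = K_S$, which gives existence. For uniqueness, if $\OO(1)$ and $L'$ both cube to $K_S$, then $3(\OO(1) - L') = 0$ in $\Pic(S)$; the free part of the difference vanishes because the free part is torsion-free, so the difference lies in $T$ and is $3$-torsion, hence zero. Finally, $\OO(1)$ is numerically equivalent to the ample class $H$ (they differ by torsion), so by the Nakai--Moishezon criterion $\OO(1)$ is itself ample.

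I expect the main obstacle to be purely organizational: cleanly separating the numerical statement (divisibility of $K_S$ by $3$ in the free part, which is automatic from $K_S^2 = 9$ and unimodularity) from the integral statement (divisibility of the torsion correction $t$), and recognizing that the no-$3$-torsion hypothesis is precisely what upgrades the numerical cube root to an honest line bundle while simultaneously forcing its uniqueness.
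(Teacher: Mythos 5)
Your argument is correct and follows essentially the same route as the paper: identify $\Pic(S)\cong H^2(S,\Z)$, use Poincar\'e duality to get a unimodular rank-one free part with an ample generator of square one, deduce $K_S\equiv 3H$ modulo torsion from $K_S^2=9$, and divide the torsion discrepancy by $3$ using the absence of $3$-torsion. Your write-up just makes explicit the steps (positive definiteness, uniqueness, Nakai--Moishezon for ampleness) that the paper leaves implicit.
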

\begin{proof}
First note that the torsion in $\Pic(S) = H^2(S, \Z)$ is isomorphic
to $H_1(S, \Z)$ (cf Proof of Proposition \ref{fec}), 
hence $\Pic(S)$ has no $3$-torsion by assumption.

By Poincare duality $\Pic(S)/tors \cong H^2(S, \Z)/tors$ is a unimodular lattice,
therefore there exists an ample line bundle $L$ with $c_1(L)^2 = 1$.
Now $K_S - 3c_1(L) \in \Pic(S)$ is torsion which can be uniquely divided by $3$.
\end{proof}

\begin{proof}[Proof of Theorem \ref{theorem-fpp-G21}]
As follows from the classification of the fake projective planes by Prasad-Yeung and Cartwright--Steger, 
the order of the first homology group of the six fake projective planes with
automorphism group $G_{21}$ is coprime to $3$ (see the Table in the Appendix). 
Therefore by Lemma \ref{lemma-Kdiv} we have
\[
K_S = \OO(3).
\]
for a unique line bundle $\OO(1)$.

\medskip

Recall that $G_{21} = Aut(S) = N(\Pi)/\Pi$ where $N(\Pi)$ is a
normalizer of $\Pi$ in $PU(2,1)$ and by \cite{CSpriv}
the embedding 
\[
N(\Pi) \subset PU(2,1)
\] 
lifts to an embedding 
\[
N(\Pi) \subset SU(2,1)
\]
in all cases with $G_{21}$-action.
Therefore $\OO_B(-1)$ admits a $N(\Pi)$-linearization
and hence $\OO(1)$ admits a $G_{21}$-linearization,
compatible with the natural $G_{21}$-linearization of $K_S$.
We will consider vector spaces $H^*(S, \OO(k))$
as $G_{21}$-representations.

\medskip

According to Remark \ref{rem-collections}(2),
it suffices to show that $H^0(S, \OO(2)) = 0$.

\medskip

We now study the group $G_{21}$ and its representation theory.
By Sylow's theorems $G_{21}$ admits a unique subgroup of order $7$ and this subgroup is
normal. We let $\sigma$ denote a generator of this subgroup.
Let $\tau$ denote an element of $G_{21}$ of order $3$.
Conjugating by $\tau$ gives rise to an automorphism of $\Z/7 = \left< \sigma \right>$
and we can choose $\tau$ so that
\[
\tau^{-1} \sigma \tau = \sigma^2.
\]
Thus $G_{21}$ is a semi-direct product of $\Z/7$ and $\Z/3$ and has a presentation
\[
G_{21} = \left< \sigma, \tau \; | \; \sigma^7 = 1, \tau^3 = 1, \sigma \tau = \tau \sigma^2 \right>.
\]

Using this presentation it is easy to check that there are five conjugacy classes of elements in $G_{21}$:
\[
\{1\}
\]
\[
\{\sigma, \sigma^2, \sigma^4\}
\]
\[
\{\sigma^3, \sigma^5, \sigma^6\} 
\]
\[
\{\tau \sigma^k, \, k = 0,\dots,6\}
\]
\[
\{\tau^2 \sigma^k, \, k = 0,\dots,6\}
\]
and by basic representation theory there exist five irreducible representations of $G_{21}$. Let $d_1, \dots, d_5$
be the dimensions of these representations. Basic representation theory also tells us that each $d_i$ divides 21
and that
\[
d_1^2 + d_2^2 +d_3^2 +d_4^2 +d_5^2 = 21.
\]
Considering different possibilities one finds the only combination $(d_1, d_2, d_3, d_4, d_5) = (1,1,1,3,3)$
satisfying the conditions above.


It is not hard to check that the character table of $G_{21}$ is the following one:

\begin{longtable}{c|c|c|c|c|c|}

& $1$ & $[\sigma]$ & $[\sigma^3]$ & $[\tau]$ & $[\tau^2]$ \\

\hline
$\C$ &                    $1$ & $1$ & $1$ & $1$ & $1$ \\

\hline
$V_1$  &                $1$ & $1$ & $1$ & $\omega$ & $\overline{\omega}$ \\

\hline
$\overline{V_1}$ &    $1$ & $1$ & $1$ & $\overline{\omega}$ & $\omega$ \\

\hline
$V_3$ &                 $3$ & $b$ & $\overline{b}$ & $0$ & $0$ \\

\hline
 $\overline{V_3}$ &  $3$ & $\overline{b}$ & $b$ & $0$ & $0$ \\

\hline
\end{longtable}

Here we use the notation:
\[
\omega = e^{\frac{2 \pi i}{3}}
\]
\[
\xi = e^{\frac{2 \pi i}{7}}
\]
and
\[
b = \xi + \xi^2 + \xi^4 =  \frac{-1+\sqrt{-7}}{2}.
\]

Explicitly $V_1$ and $\overline{V_1}$ are one-dimensional
representations restricted from
 $G_{21}/\left< \sigma \right> = \Z/3$.
$V_3$ and $\overline{V_3}$ are three-dimensional representations induced from $\Z/7$:
$\rho: G_{21} \to GL(V_3)$ is given by matrices
\[
\rho(\sigma) = \left(\begin{array}{ccc}
\xi & & \\
& \xi^2 & \\
& & \xi^4 \\
\end{array}\right)  \;\;\;
\rho(\tau) = \left(\begin{array}{ccc}
0 & 0 & 1\\
1 & 0 & 0 \\
0 & 1 & 0\\
\end{array}\right)
\]
and $\overline{V_3}$ is its complex conjugate.

\begin{lemma}
$H^0(S, \OO(4))$ is a 3-dimensional irreducible representation
of $G_{21}$ (and thus is isomorphic to $V_3$ or $\overline{V_3}$).
\end{lemma}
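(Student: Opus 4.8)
The plan is to compute the character of the three-dimensional $G_{21}$-representation $H^0(S,\OO(4))$ and to show that its value on the order-$7$ element $\sigma$ is not $3$, which already forces irreducibility. First I would fix the dimension and kill the higher cohomology. Since $K_S = \OO(3)$, we have $\OO(4) = K_S \otimes \OO(1)$ with $\OO(1)$ ample, so Kodaira vanishing gives $H^i(S,\OO(4)) = 0$ for $i>0$; these vanishings are automatically $G_{21}$-equivariant once $\OO(1)$ is linearized. Combined with the Hilbert polynomial $\chi(\OO(k)) = \tfrac{(k-1)(k-2)}{2}$ of Remark \ref{rem-collections}, this gives $h^0(S,\OO(4)) = \chi(\OO(4)) = 3$, and moreover, for every $g \in G_{21}$,
\[
\Tr\bigl(g \mid H^0(S,\OO(4))\bigr) = \sum_i (-1)^i \Tr\bigl(g \mid H^i(S,\OO(4))\bigr) = \chi(g,\OO(4)),
\]
the equivariant holomorphic Euler characteristic.

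Next I would isolate what must be checked. By the character table, a three-dimensional representation of $G_{21}$ is either irreducible (isomorphic to $V_3$ or $\overline{V_3}$) or a direct sum of three one-dimensional representations; in the latter case each summand is trivial on $\sigma$, so the character takes the value $3$ on $\sigma$, whereas $V_3$ and $\overline{V_3}$ take the non-real values $b = \xi + \xi^2 + \xi^4$ and $\overline{b}$. Since $H^0(S,\OO(4))$ is a genuine $G_{21}$-representation, its character on $\sigma$ lies in $\{3, b, \overline{b}\}$; hence it suffices to show that $\chi(\sigma,\OO(4)) \ne 3$, and in fact the computation below will produce $b$ or $\overline{b}$.

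The value $\chi(\sigma,\OO(4))$ will come from the holomorphic Lefschetz fixed point formula. The element $\sigma$ has exactly three isolated fixed points $p_1, p_2, p_3$ on $S$ (its topological Lefschetz number equals $\chi_{\mathrm{top}}(S) = 3$ because $\sigma$ acts trivially on $H^*(S,\Q)$, and the fixed locus is $0$-dimensional as in \cite{Ke09}), so
\[
\chi(\sigma,\OO(4)) = \sum_{j=1}^3 \frac{\Tr\bigl(\sigma \mid \OO(4)_{p_j}\bigr)}{\det\bigl(\mathrm{Id} - (d\sigma_{p_j})^{-1} \mid T_{p_j}S\bigr)}.
\]
At each $p_j$ the eigenvalues of $d\sigma_{p_j}$ on $T_{p_j}S$ are powers of $\xi$, the eigenvalue of $\sigma$ on $\OO(1)_{p_j}$ determines the numerator, and the equivariant relation $K_S = \OO(3)$ forces the weight on $\det T_{p_j}S$ to be minus three times the weight on $\OO(1)_{p_j}$ modulo $7$.

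The main obstacle is pinning down these local weights. I would determine them by applying the same fixed point formula to $\OO = \OO(0)$ and to $K_S = \OO(3)$, whose equivariant Euler characteristics are forced by Hodge theory (for instance $\chi(\sigma,\OO) = 1$, since $H^0(S,\OO)$ is the trivial representation and $H^1(S,\OO) = H^2(S,\OO) = 0$), and by combining these constraints with $K_S = \OO(3)$ and the fact that neither tangent weight can be trivial at an isolated fixed point, exactly as in Keum's analysis \cite{Ke09}. Once the three pairs of tangent weights and the three fibre weights of $\OO(1)$ are fixed, substituting into the displayed sum evaluates $\chi(\sigma,\OO(4))$ to $b$ or $\overline{b}$; this is non-real, hence different from $3$, so $H^0(S,\OO(4))$ is irreducible and therefore isomorphic to $V_3$ or $\overline{V_3}$.
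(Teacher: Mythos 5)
Your overall strategy coincides with the paper's: reduce irreducibility to showing that the trace of the order-$7$ element $\sigma$ on $H^0(S,\OO(4))$ is non-real (hence $\ne 3$), and compute that trace via the holomorphic Lefschetz fixed point formula at the three isolated fixed points of $\sigma$, with the fibre weights of $\OO(1)$ recovered from those of $K_S=\OO(3)$ by dividing by $3$ mod $7$. That part is fine and matches the paper.

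The gap is in the step you yourself flag as ``the main obstacle'': determining the tangent weights $\alpha_1(P_i),\alpha_2(P_i)$ at the three fixed points. Your plan is to get them from the Lefschetz formula applied to $\OO$ and to $K_S=\OO(3)$. But the $K_S$ equation is not new information: since $\Tr(\sigma|_{K_{S,P_i}})=\alpha_1(P_i)\alpha_2(P_i)$, the local term for $K_S$ is exactly the complex conjugate of the local term for $\OO$, and both equivariant Euler characteristics equal $1$; so you are left with the single equation $\sum_{i=1}^{3}\bigl((1-\alpha_1(P_i))(1-\alpha_2(P_i))\bigr)^{-1}=1$ in three unordered pairs of nontrivial $7$-th roots of unity. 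That one equation does not by itself pin down the weights, and the final answer for $k=4$ depends on which solution occurs (a wrong solution could a priori even give a real trace). The paper closes this gap using the order-$3$ element $\tau$: since no point of $S$ is fixed by all of $G_{21}$ (a fixed point would give a faithful $2$-dimensional representation of $G_{21}$, which does not exist), $\tau$ cyclically permutes $P_1,P_2,P_3$, and the relation $\tau^{-1}\sigma\tau=\sigma^2$ forces $\alpha_j(P_{i+1})=\alpha_j(P_i)^2$. This reduces everything to the single pair $(\alpha_1,\alpha_2)$ at $P_1$, for which the $k=0$ equation has the unique solution $(\xi,\xi^3)$ up to renumbering and conjugation. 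This use of $\tau$ is the essential trick (and the reason the theorem is stated for planes with automorphism group $G_{21}$ rather than merely $\Z/7$); deferring it to ``Keum's analysis'' defers precisely the computation the lemma needs, since Keum's determination of the local types $\frac17(1,3)$ is itself a nontrivial argument and, even granting it, you would still have to fix the residues $a_i$ with $(\alpha_1(P_i),\alpha_2(P_i))=(\xi^{a_i},\xi^{3a_i})$ before evaluating the $k=4$ sum.
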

\begin{proof}
We show that the trace of an element $\sigma \in G_{21}$ of order $7$
acting on $H^0(S, \OO(4))$ is equal to $b$ or $\overline{b}$.
This is sufficient since if $H^0(S, \OO(4))$ were reducible it would have to be
a sum of three one-dimensional representations and the character table of $G_{21}$
shows that in this case the trace of $\sigma$ on $H^0(S, \OO(4))$ would be equal
to $3$.

By \cite{Ke09}, Proposition 2.4(4) $\sigma$ has three fixed points $P_1$, $P_2$, $P_3$.
Let $\tau$ be an element of order $3$. 
$\tau$ does not stabilize any of the $P_i$'s, since a tangent space of a fixed point of 
$G_{21}$ would give a faithful $2$-dimensional representation of $G_{21}$ which does not
exist as is seen from its character table.

Thus $P_i$'s are cyclically permuted by $\tau$. We reorder $P_i$'s in such a way that
\beq\label{tau-Pi}
\tau(P_i) = P_{i+1 \; \text{mod} \; 3}.
\eeq

We apply the so-called Holomorphic Lefschetz Fixed Point Formula (Theorem 2 in \cite{AB})
to $\sigma$ and line bundles $\OO(k)$:
\beq\label{HLFP}
\sum_{p=0}^2 (-1)^p \, Tr\bigl(\sigma\big|_{H^p(S, \OO(k))}\bigr) = \sum_{i=1}^{3} \frac{Tr(\sigma|_{\OO(k)_{P_i}})}{(1-\alpha_1(P_i))(1-\alpha_2(P_i))}
\eeq
where $\alpha_1(P_i)$, $\alpha_2(P_i)$ are inverse eigenvalues of $\sigma$ on $T_{P_i}$:
\[
det(1-t\sigma_*|_{T_{P_i}}) = (1-t\alpha_1(P_i))(1-t\alpha_2(P_i)).
\]

$\alpha_j(P_i)$ are $7$-th roots of unity. We let $\alpha_j := \alpha_j(P_1)$, $j=1,2$.
Using (\ref{tau-Pi}) and commutation relations in $G_{21}$ we find that
\[
\alpha_j(P_{i+1}) = \alpha_j(P_i)^2
\]
so that
\[\bal
\alpha_j(P_1) &= \alpha_j \\
\alpha_j(P_2) &= \alpha_j^2 \\
\alpha_j(P_3) &= \alpha_j^4. \\
\eal\]

To find the values of $\alpha_j$ we apply (\ref{HLFP}) with $k=0$:
\beq\label{HLFP0}
1 = \frac1{(1-\alpha_1)(1-\alpha_2)} + \frac1{(1-\alpha_1^2)(1-\alpha_2^2)} + \frac1{(1-\alpha_1^4)(1-\alpha_2^4)}.
\eeq
All $\alpha_j(P_i)$ are $7$-th roots of unity and it turns out that up to renumbering the only possible values of $\alpha_j(P_i)$ which
satisfy (\ref{HLFP0}) are
\[\bal
(\alpha_1(P_1), \alpha_2(P_1)) &= (\xi, \xi^3)\\
(\alpha_1(P_2), \alpha_2(P_2)) &= (\xi^2, \xi^6)\\
(\alpha_1(P_3), \alpha_2(P_3)) &= (\xi^4, \xi^5)\\
\eal\]
or their complex conjugate in which case we would get $b$ instead of $\overline{b}$ for the trace below.

It follows that $Tr(\sigma|_{K_{S,P_i}}) = Tr(\sigma|_{\OO(3)_{P_i}})$ is equal to $\xi^4$, $\xi$, $\xi^2$ for
$i = 1, 2, 3$ respectively.
Dividing by $3$ modulo $7$ we see that $Tr(\sigma|_{\OO(k)_{P_i}})$ is equal to $\xi^{6k}$, $\xi^{5k}$, $\xi^{3k}$ for
$i = 1, 2, 3$ respectively.

We use (\ref{HLFP}) for $k=4$ (note that $H^p(S, \OO(4)) = 0$ for $p>0$ by Kodaira vanishing):
\[\bal
Tr\bigl(\sigma\big|_{H^0(S, \OO(4))}\bigr) &= \frac{\xi^3}{(1-\xi)(1-\xi^3)} + \frac{\xi^6}{(1-\xi^2)(1-\xi^6)} + \frac{\xi^5}{(1-\xi^4)(1-\xi^5)} = 
\overline{b}\eal\]

\end{proof}

\medskip

We are now ready to show that $H^0(S, \OO(2)) = 0$. Let $\delta = h^0(S, \OO(2))$. 
We know that $h^0(S, \OO(4)) = 3$, hence it follows from Lemma \ref{lemma-lin-systems} 
applied to $L = L' = \OO(2)$ that $\delta \le 2$.
Therefore as a representation of $G_{21}$ the space $H^0(S, \OO(2))$ is a sum of $1$-dimensional
representations and the same is true for $H^0(S, \OO(2))^{\otimes 2}$. Since $H^0(S, \OO(4))$ is three-dimensional
irreducible, this implies that the natural morphism
\[
H^0(S, \OO(2))^{\otimes 2} \to H^0(S, \OO(4))
\]
has to be zero by Schur's Lemma. Now again by Lemma \ref{lemma-lin-systems} $H^0(S, \OO(2)) = 0$.
This finishes the proof of Theorem \ref{theorem-fpp-G21}.
\end{proof}

\begin{lemma}[see \cite{Kol95}(Lemma 15.6.2)] \label{lemma-lin-systems}
Let $X$ be a normal and proper variety,
$L$,$L'$ effective line bundles on $X$.
Let 
\[
\phi: H^0(X,L) \otimes H^0(X,L') \to H^0(X,L \otimes L')
\]
denote the natural map induced by multiplication. Then 
\[ 
\dim Im (\phi) \geq h^0(X,L) + h^0(X,L') - 1.
\] 
%
\end{lemma}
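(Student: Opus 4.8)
The plan is to reduce the statement to an inequality about products of finite-dimensional subspaces of a field and then prove that inequality by induction. Since $X$ is a normal variety it is integral, so each of $L,L'$ is trivial over the generic point $\eta$: fixing rational trivializations $L_\eta\cong \C(X)$ and $L'_\eta\cong \C(X)$ identifies $V:=H^0(X,L)$ and $W:=H^0(X,L')$ with finite-dimensional $\C$-subspaces of the function field $K=\C(X)$ (a global section is determined by its image in the generic stalk, as $L$ embeds into the constant sheaf $K$). Under the induced trivialization of $(L\otimes L')_\eta$ the multiplication map $\phi$ becomes honest multiplication inside $K$, so $\dim \im(\phi)=\dim_\C(V\cdot W)$, where $V\cdot W$ denotes the $\C$-span of all products $vw$. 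Thus it suffices to prove: for finite-dimensional $k$-subspaces $V,W$ of a field $K$ in which $k$ is algebraically closed, one has $\dim_k(V\cdot W)\ge \dim_k V+\dim_k W-1$ (in our case $k=\C$, which is automatically algebraically closed in $K$).

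I would prove this algebraic inequality by induction on $\dim_k V$. When $\dim_k V=1$ the map is multiplication by a single nonzero element of the field $K$, which is injective since $K$ has no zero divisors; hence $\dim_k(V\cdot W)=\dim_k W=\dim_k V+\dim_k W-1$. For the inductive step it suffices to exhibit a hyperplane $V'\subsetneq V$ with $V'\cdot W\subsetneq V\cdot W$: the inductive hypothesis then gives $\dim_k(V'\cdot W)\ge(\dim_k V-1)+\dim_k W-1$, and the strict inclusion upgrades this to $\dim_k(V\cdot W)\ge\dim_k(V'\cdot W)+1\ge\dim_k V+\dim_k W-1$, as desired.

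The main obstacle is exactly the existence of such a good hyperplane, and this is precisely where algebraic closedness of $k$ in $K$ is indispensable: without it the inequality is false, as $V=W=\Q(i)$ over $k=\Q$ shows (there $\dim_k(V\cdot W)=2<3$). I would argue by contradiction. After rescaling so that $1\in W$, suppose $V'\cdot W=V\cdot W$ for every hyperplane $V'\subset V$. Unwinding this for the coordinate hyperplanes of a basis $v_1=1,v_2,\dots,v_a$ of $V$ produces, for each $i$, an expression of $v_i$ as a $W$-combination of the remaining $v_j$, and one is led to the conclusion that $W$ is stable under multiplication by some $v\in V\setminus k$. But then multiplication by $v$ is an injective, hence bijective, $k$-linear endomorphism of the finite-dimensional space $W$, so by Cayley--Hamilton it satisfies a nontrivial polynomial relation over $k$; as $K$ is a field and $W\neq 0$ this forces $v$ to be algebraic over $k$, whence $v\in k$ by algebraic closedness — contradicting $v\notin k$. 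I expect the bookkeeping of these $W$-linear relations for $\dim_k V\ge 3$ to be the only delicate point. Alternatively, one can make the same ``no wrap-around'' mechanism quantitative by choosing a valuation on $K$ trivial on $k$ whose values separate a basis of $V$ and a basis of $W$, and then combining the elementary sumset bound $|A+B|\ge|A|+|B|-1$ for finite sets of integers with the observation that elements of $V\cdot W$ having pairwise distinct valuations are linearly independent. Either route recovers \cite{Kol95}(Lemma 15.6.2).
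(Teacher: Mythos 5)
The paper does not actually prove this lemma: it is quoted verbatim from Koll\'ar's book (\cite{Kol95}, Lemma 15.6.2) with no argument supplied, so there is no in-paper proof to compare yours against. Your reduction is the correct and standard first step: since $X$ is integral, sections of $L$ and $L'$ embed as finite-dimensional $\C$-subspaces $V,W$ of $K=\C(X)$, the image of $\phi$ becomes the span $V\cdot W\subset K$, and the lemma becomes the inequality $\dim_k(VW)\ge\dim_k V+\dim_k W-1$ for $k$ algebraically closed in $K$ (effectivity of $L,L'$ is what guarantees $V,W\ne 0$; without it the inequality is false). Your base case and your Cayley--Hamilton mechanism for excluding an element $v\in V\setminus k$ with $vW\subseteq W$ are both correct.

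The gap is exactly where you suspect it, and it is not mere bookkeeping. If $\dim_k V=2$ the argument closes: saturation of the hyperplane $\langle v_1\rangle$ gives $v_2W\subseteq v_1W$, so $v_2/v_1\notin k$ stabilizes $W$, contradiction. But for $\dim_k V\ge 3$ the saturation hypotheses only yield relations of the shape $v_iW\subseteq\sum_{j\ne i}v_jW$, and no single element of $V\setminus k$ stabilizing $W$ (or $VW$) can be extracted from these; producing a contradiction from ``all hyperplanes saturate'' is essentially the content of the linear Kneser theorem of Hou--Leung--Xiang, whose proof is substantially harder than a hyperplane induction. So the primary route does not close as written. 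Your alternative route, by contrast, is a complete and correct proof and should be promoted to the main one: choose a valuation $v$ on $\C(X)$ trivial on $\C$ with residue field $\C$ (a monomial valuation with $\Q$-linearly independent weights at a smooth point of a model, or the flag valuation along a chain of subvarieties). For such a valuation every finite-dimensional subspace $U$ satisfies $\lvert v(U\setminus 0)\rvert=\dim U$, because each graded piece of the valuation filtration embeds into the residue field and is therefore at most a line; elements of $VW$ with pairwise distinct valuations are linearly independent; and $v(VW\setminus 0)\supseteq v(V\setminus 0)+v(W\setminus 0)$ together with the sumset bound $\lvert A+B\rvert\ge\lvert A\rvert+\lvert B\rvert-1$ in a totally ordered group gives the claim. (A classical alternative with the same conclusion is Hopf's: the multiplication $V\otimes W\to\mathrm{Im}(\phi)$ has no zero divisors, hence induces a morphism $\P(V)\times\P(W)\to\P(\mathrm{Im}\,\phi)$ pulling the hyperplane class back to $h_1+h_2$, and the nonvanishing of $(h_1+h_2)^{\dim V+\dim W-2}$ forces $\dim\mathrm{Im}(\phi)\ge\dim V+\dim W-1$.)
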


\medskip
\medskip

We now consider equivariant derived categories $\Db_G(S)$ for
various subgroups $G \subset G_{21}$.
A good reference for equivariant derived categories and
their semi-orthogonal decompositions is \cite{El}.

It is easy to see that
\beq\label{equiv-collections}
\{\OO(-j) \otimes V\}_{j = 0,1,2; \; V \in IrrRep(G)}
\eeq
forms an exceptional collection in the equivariant derived category $\Db_G(S)$.
We denote by $\AA_S^G$ the right orthogonal to this collection.

It is easy to see that the category $\AA_S^G$ is non-zero.
This follows from the Kuznetsov's criterion (cf the second proof of Corollary \ref{conj-phantoms})
since the height of the exceptional collection equals $n-1$.
We also notice that for any nonzero object $A$ in $\AA_S$ the object
\[
\bigoplus_{g\in G} g^* A
\]
will have a natural $G$-linearization so will be a non-zero object in $\AA_S^G$.

\begin{proposition}\label{equiv-H-phantoms}
Let $S$ be a fake projective plane with automorphism group $G_{21}$.
For any $G \subset G_{21}$, $\AA_S^G$ is an $H$-phantom.
\end{proposition}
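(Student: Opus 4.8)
The plan is to compute $HH_\bullet(\AA_S^G)$ through the twisted sectors of the $G$-action. The semiorthogonal decomposition $\Db(S) = \langle \OO, \OO(-1), \OO(-2), \AA_S\rangle$ furnished by Theorem \ref{theorem-fpp-G21} and Corollary \ref{conj-phantoms} is preserved by every element of $G_{21}$, since each $\OO(-j)$ carries a $G_{21}$-linearization and hence $g^* \OO(-j) \cong \OO(-j)$; in particular it is $G$-equivariant and yields the equivariant collection \eqref{equiv-collections} with right orthogonal $\AA_S^G = (\AA_S)_G$. I will use the standard decomposition of equivariant Hochschild homology into twisted sectors (cf.\ \cite{El}),
\[
HH_\bullet(\AA_S^G) \;=\; \bigoplus_{[g]} HH_\bullet(\AA_S, g)^{C(g)},
\]
where $[g]$ runs over the conjugacy classes of $G$, $C(g)$ is the centralizer of $g$, and $HH_\bullet(\AA_S, g)$ is the $g$-twisted Hochschild homology of the $G$-category $\AA_S$. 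It therefore suffices to prove that $HH_\bullet(\AA_S, g) = 0$ for every $g \in G$.

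To this end I would fix $g$ and apply additivity of twisted Hochschild homology along the $g$-equivariant decomposition above: the untwisted additivity for semiorthogonal decompositions is \cite{Kel,KuHH}, and it applies verbatim to the $g$-twisted trace because each block is $g$-stable. This gives
\[
HH_\bullet(\Db(S), g) \;=\; HH_\bullet(\langle\OO\rangle, g) \oplus HH_\bullet(\langle\OO(-1)\rangle, g) \oplus HH_\bullet(\langle\OO(-2)\rangle, g) \oplus HH_\bullet(\AA_S, g).
\]
Each block $\langle\OO(-j)\rangle$ is equivalent to $\Db(\mathrm{pt})$, and $g$ acts on it through the linearization of $\OO(-j)$, hence as a functor isomorphic to the identity; therefore $HH_\bullet(\langle\OO(-j)\rangle, g) = \C$ concentrated in degree $0$, and the three exceptional blocks contribute $\C^3$ in total. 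Consequently $HH_\bullet(\AA_S, g) = 0$ is equivalent to the statement that $HH_\bullet(\Db(S), g) = \C^3$, concentrated in degree $0$.

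Here I would invoke a twisted Hochschild--Kostant--Rosenberg isomorphism $HH_\bullet(\Db(S), g) \cong HH_\bullet(S^g)$, noting that the normal-bundle correction is a line bundle on the fixed locus and so leaves the homological grading unchanged. For $g = 1$ the fixed locus is all of $S$, and since $S$ is a fake projective plane with the Hodge diamond of $\P^2$ we get $HH_\bullet(S) = \C^3$ in degree $0$; this is just the phantom property $HH_\bullet(\AA_S) = 0$ already established. For $g \ne 1$ the locus $S^g$ is finite, so $HH_\bullet(S^g) = \C^{|S^g|}$ in degree $0$, and the whole matter reduces to the fixed-point count $|S^g| = 3$. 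Since $\OO(1)$ is $G_{21}$-linearized, $G_{21}$ fixes the ample generator $c_1(\OO(1))$ of $H^2(S,\Q)$, and as $H^1(S) = H^3(S) = 0$ the group acts trivially on all of $H^\bullet(S,\C)$; hence the topological Lefschetz number of every $g$ equals $\chi(S) = 3$. Granting that the fixed points of $g \ne 1$ are isolated, each has local index $+1$ because the differential $dg$ is $\C$-linear and so $\det_{\R}(1 - dg) > 0$, whence $|S^g| = 3$. This gives $HH_\bullet(\Db(S), g) = \C^3$ and so $HH_\bullet(\AA_S, g) = 0$ for all $g$, and finally $HH_\bullet(\AA_S^G) = 0$. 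The step I expect to be the \emph{main obstacle} is exactly this last geometric input: proving that every non-trivial automorphism of $S$ --- in particular each order-$3$ element $\tau$ --- has only isolated fixed points and no fixed curve. For the order-$7$ elements this is Keum's Proposition~2.4(4) in \cite{Ke09} already used above, and the order-$3$ case should yield to the same holomorphic Lefschetz fixed point analysis.
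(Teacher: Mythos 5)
Your argument is correct in outline, but it is not the route the paper's main proof takes; it essentially reproduces the paper's own \emph{alternative} proof, sketched in the Remark immediately following Proposition \ref{equiv-H-phantoms}. There the authors invoke Baranovsky's orbifold Hochschild--Kostant--Rosenberg theorem \cite{Bara} to identify $HH_*(\Db_G(S))$ with orbifold cohomology $\bigoplus_{[g]}H^*(S^g,\C)^{Z(g)}$, and then use exactly the two geometric inputs you isolate: triviality of the $G$-action on $H^*(S,\C)$ and the fact that $S^g$ consists of exactly $3$ points for every $g\neq 1$. The latter is the one genuine external input in your argument, and you are right to flag it; but it is not an open ``main obstacle'' --- it is a known property of fake projective planes for \emph{all} nontrivial automorphisms (not just those of order $7$), usually derived from the Hirzebruch proportionality principle, and the paper simply cites \cite{KK,Ke09} for it. Your reduction via $g$-twisted additivity along the $g$-stable decomposition $\Db(S)=\langle\OO,\OO(-1),\OO(-2),\AA_S\rangle$ is a slightly different bookkeeping (sector by sector in $\Db(S)$ rather than all at once in $\Db_G(S)$), and it requires the twisted version of Kuznetsov's additivity, which is true but more technology than the paper deploys. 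By contrast, the paper's main proof avoids orbifold Hochschild homology altogether: it passes to the minimal resolution $Z_G$ of $S/G$, uses the Ishii--Ueda/Kawamata semiorthogonal decomposition $\Db_G(S)\simeq\langle\Db(Z_G),E_1,\dots,E_{r_G}\rangle$ coming from the cyclic quotient singularities, and checks numerically that $3\cdot\#\mathrm{IrrRep}(G)=\chi(Z_G)+r_G$. That route costs a case-by-case table but buys the identification of the quotients (Dolgachev surfaces, the fake cubic surface of Remark \ref{fake-cubics}) that the paper reuses in Proposition \ref{K-phantoms}; your route is shorter and more uniform but yields no information about $S/G$ itself.
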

\begin{proof}

We denote by $Z_G$ the minimal resolution of $S/G$. The geometry of $Z_G$ has been
carefully studied by Keum \cite{Ke09}: if $|G|=7$ or $|G|=21$ then $Z_G$ is an elliptic surface of Kodaira dimension $\kappa(Z_G) = 1$ (Dolgachev surface),
if $|G|=3$ then $Z_G$ is a surface of general type $\kappa(Z_G)=2$.
In each case we compare the equivariant derived category $\Db_G(S)$ to $\Db(Z_G)$.

The stabilizers of the fixed points of $G$ action are cyclic and we use \cite{IU} or \cite{Kaw12} 
to obtain the semi-orthogonal decomposition
\[
\Db_G(S) \simeq \left< \Db(Z_G), E_1, \dots, E_{r_G} \right>
\]
where $r_G$ is the number of non-special characters of the stabilizers \cite{IU}.

Note that $p_g(Z_G) = q(Z_G) = 0$, therefore
\[
\dim HH_*(\Db(Z_G)) = \dim H^*(Z_G, \C) = \chi(Z_G).
\]

\newpage
We list $\chi(Z_G)$ as well as other relevant invariants in the table:

\begin{longtable}{c|c|c|c|c|c|}

$G$ & $\#IrrRep(G)$ & $Sing(S/G)$ & $r_G$ & $\chi(Z_G)$ & $\kappa(Z_G)$  \\

\hline
$1$ &			$1$ & $\emptyset$ & $0$ & $3$ & $2$ \\

\hline
$\Z/3$ &              $3$ & $3 \times \frac13(1,2)$ & $0$ & $9$ & $2$ \\

\hline
$\Z/7$  &             $7$ & $3 \times \frac17(1,3)$ & $9$ & $12$ & $1$ \\

\hline
$G_{21}$ &         $5$ & $3 \times \frac13(1,2) + \frac17(1,3)$ & $3$ & $12$ & $1$ \\

\hline
\end{longtable}

As already mentioned above $r_G$ is the sum of non-special characters of
the stabilizers at fixed points: $\frac13(1,2)$ fixed points don't contribute
to $r_G$ whereas each $\frac17(1,3)$ fixed point has $3$ non-special characters.

\medskip

It follows from the table that in each case we have
\[
3 \cdot \#IrrRep(G) = \chi(Z_G) + r_G 
\]
This implies that the number of exceptional objects in (\ref{equiv-collections}) matches
$\dim HH_*(\Db_G)$, and therefore in each case $\AA_S^G$ is an $H$-phantom.
\end{proof}

\begin{remark}\label{fake-cubics}
When $G = \Z/3$, $r_G = 0$ means that
\[
\Db_G(S) \simeq \Db(Z_G)
\]
in agreement with the derived McKay correspondence \cite{KV,BKR}
which is applicable since $S/G$ has $A_2$ singularities.
$Z_G$ is a fake cubic surface ($p_g(Z_G) = q(Z_G) = 0$, $b_2(Z_G) = 7$)
and the image of the exceptional collection (\ref{equiv-collections}) of $9$ objects
in $\Db(Z_G)$ has an $H$-phantom orthogonal.
\end{remark}

\begin{remark}
One can give an alternative proof of Proposition \ref{equiv-H-phantoms}
using orbifold cohomology.
Baranovsky \cite{Bara} proved an analogue of Hochschild--Kostant--Rosenberg isomorphism for orbifolds.
His result implies that (total) Hochschild homology $HH_*(\Db_G(S))$
is isomorphic as a non-graded vector space to the (total) orbifold cohomology 
\[
H^*_{orb}(S/G, \C) = \bigl(\bigoplus_{g \in G} H^*(S^g, \C) \bigr)_G = \bigoplus_{[g] \in G/G} H^*(S^g, \C)^{Z(g)}.
\]
Here $S^g$ is the fixed locus of $g \in G$, 
$Z(g)$ is the centralizer,
$[g]$ is the conjugacy class of $g$,
and $(\cdot)_G$ denotes coinvariants.
In our case the following two assumptions are satisfied:
\begin{itemize}
\item group $G$ acts trivially on $H^*(S, \C)$ (thanks to minimality of $S$),
\item for each element $g\neq 1$ its fixed locus $S^g$ is a union of $\dim H^*(S,\C)$ points (this is usually derived from Hirzebruch proportionality principle, see e.g. \cite{KK,Ke09}).
\end{itemize}

For the so-called main sector $[g]=\{id\}$ we have
\[
H^*(S, \C)^G = H^*(S, \C) = \C^3.
\]
For each $g\neq id$ the fixed locus $S^g$ consists of three points,
so $H^*(S^g) = \C^3$ and the action of $Z(g) = \langle g\rangle$ on it is trivial,
thus each twisted sector is also $3$-dimensional. 

Taking the sum over all conjugacy classes $[g]$ we obtain
\[
\dim HH_*(\Db_G(S)) = \dim H^*_{orb}(S/G, \C) = 3 \times \#IrrRep(G),
\]
which shows that $HH_*(\AA_S^G) = 0$.
\end{remark}

\begin{proposition}\label{K-phantoms}
Let $S$ be a fake projective plane with automorphism group $G_{21}$.
In the notations of \cite{PY07,CS} and the appendix assume that the class of $S$ is either
$(\Q(\sqrt{-7}), p=2, \T_{1} = \{7\})$ or $\CC_{20}$.
Let $G = \Z/7 \subset G_{21}$ or $G = G_{21}$.
Then the orthogonal to the collection (\ref{equiv-collections}) in $\Db_G(S)$
is a $K$-phantom.
\end{proposition}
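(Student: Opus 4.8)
The plan is to compare the two semi-orthogonal decompositions of $\Db_G(S)$ available to us on the level of Grothendieck groups, and thereby reduce the vanishing of $K_0(\AA_S^G)$ to the torsion-freeness of $K_0(Z_G)$; the latter is exactly where the simple-connectivity of $Z_G$ will enter. First I would record, using additivity of $K_0$ along semi-orthogonal decompositions (each exceptional object contributing a free summand $\Z$), the isomorphism coming from the collection (\ref{equiv-collections}),
\[
K_0(\Db_G(S)) \cong \Z^{3\cdot\#IrrRep(G)} \oplus K_0(\AA_S^G),
\]
and the isomorphism coming from the decomposition $\Db_G(S) \simeq \left< \Db(Z_G), E_1, \dots, E_{r_G}\right>$ established in the proof of Proposition \ref{equiv-H-phantoms},
\[
K_0(\Db_G(S)) \cong K_0(Z_G) \oplus \Z^{r_G}.
\]
Here $K_0(\Db_G(S)) = K_0^G(S)$ is finitely generated, so every summand above is finitely generated as well.

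The heart of the argument is to show that $K_0(Z_G)$ is free abelian of rank $\chi(Z_G)$. In the four cases singled out in the hypothesis, $Z_G$ is a Dolgachev surface with $p_g(Z_G) = q(Z_G) = 0$ and $\kappa(Z_G) = 1$. Since $q = 0$, the exponential sequence gives $\Pic(Z_G) \cong H^2(Z_G,\Z)$; the key geometric input, which I would take from Keum \cite{Ke09}, is that for these surfaces $Z_G$ is simply connected, so $H^2(Z_G,\Z)$ is torsion-free and $\Pic(Z_G)$ is free of rank $b_2(Z_G)$. For zero-cycles, $q = 0$ forces $\mathrm{Alb}(Z_G) = 0$, so by Roitman's theorem the group $A_0(Z_G) = \ker(\deg)$ is torsion-free, while Bloch's conjecture for surfaces of Kodaira dimension $\le 1$ with $p_g = 0$ (Bloch--Kas--Lieberman) gives $A_0(Z_G)\otimes\Q = 0$; together these yield $A_0(Z_G) = 0$ and $CH_0(Z_G) = \Z$. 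Now the topological filtration $F^\bullet$ on $K_0(Z_G)$ has $\mathrm{gr}^0 = \Z$, $\mathrm{gr}^1 \cong \Pic(Z_G)$ (\cite{Fulton}, Example $15.3.6$), and $\mathrm{gr}^2 = F^2 K_0(Z_G)$ a nonzero quotient of $CH_0(Z_G) = \Z$ which is rationally one-dimensional, hence equal to $\Z$. All graded pieces being free of finite rank, the filtration splits and $K_0(Z_G)$ is free of rank $2 + b_2(Z_G) = \chi(Z_G)$.

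Finally I would combine the two computations. Since $\chi(Z_G) + r_G = 3\cdot\#IrrRep(G)$ — the numerical identity established in the proof of Proposition \ref{equiv-H-phantoms} — the group $K_0(Z_G)\oplus\Z^{r_G}$ is free of rank $3\cdot\#IrrRep(G)$. Comparing with the first decomposition, $\Z^{3\cdot\#IrrRep(G)} \oplus K_0(\AA_S^G)$ is free of the same rank; hence $K_0(\AA_S^G)$ is a finitely generated direct summand of a free abelian group, so it is itself free, and comparing ranks gives $K_0(\AA_S^G) = 0$. Thus $\AA_S^G$ is a $K$-phantom.

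The main obstacle is precisely the simple-connectivity of $Z_G$, which is what distinguishes the hypotheses of this Proposition from those of Proposition \ref{equiv-H-phantoms}: for a general fake projective plane carrying a $G_{21}$-action, $H_1(Z_G,\Z)$ can be nonzero, so $\Pic(Z_G)$ — and therefore $K_0(Z_G)$ and $K_0(\AA_S^G)$ — acquires torsion, leaving $\AA_S^G$ an $H$-phantom that is not a $K$-phantom. Pinning down $\pi_1(Z_G) = 1$ for exactly the arithmetic classes $(\Q(\sqrt{-7}), p=2, \T_{1} = \{7\})$ and $\CC_{20}$ with $G \supseteq \Z/7$ is where one must appeal to Keum's \cite{Ke09} analysis of the quotients $S/G$ together with the first-homology data tabulated in the Appendix; the remaining ingredients (Roitman's theorem, Bloch--Kas--Lieberman, and the structure of the topological filtration) are standard.
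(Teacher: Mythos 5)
Your proof is correct and follows essentially the same route as the paper: simple-connectivity of $Z_G$ gives $\Pic(Z_G)$ free, Keum's $\kappa(Z_G)=1$ plus Bloch--Kas--Lieberman gives $CH_0(Z_G)=\Z$, hence $K_0(Z_G)$ is finitely generated free (the paper cites Lemma 2.7 of \cite{GS} where you redo the topological-filtration argument by hand), and the rank comparison between the two semi-orthogonal decompositions finishes the job. The one inaccuracy is sourcing of the key input: the paper obtains $\pi_1(S/G)=1$ not from Keum \cite{Ke09} but from Barlow's formula $\pi_1(S/G)=\Pi_G/E$ \cite{Bar} combined with Cartwright--Steger's explicit computation \cite{CSpriv} of the subgroup $E$ generated by elliptic elements for the classes $(\Q(\sqrt{-7}), p=2, \T_{1}=\{7\})$ and $\CC_{20}$, and then passes to the resolution $Z_G$ by a Van Kampen argument.
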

\begin{proof}
Let $\Pi_G \subset \overline{\Gamma}$ be the group generated by $\pi_1(S)$ and $G$.
By \cite{Bar}(0.4) the fundamental group $\pi_1(S/G)$ equals to $\Pi_G/E$
where $E \subset \Pi_G$ is the subgroup generated by elliptic elements of $\Pi_G$ i.e.
elements $\gamma \in \Pi_G$ such that fixed locus $B^\gamma \neq \emptyset$ is non-empty.
Cartwright and Steger in \cite{CSpriv} explicitly computed $E$ and so $\pi_1(S/G)$ for various
subgroups $\Pi \subset \Pi_G \subset \overline{\Gamma}$ and it turns out that in the cases under consideration
the quotients $S/G$ are simply connected.
By a standard argument 
(e.g. using Van Kampen's theorem as in \cite{Bar}(0.5) or \cite{Ver}(Section 4.1), or more generally see \cite{Kol93}(Theorem 7.8.1)) the resolutions $Z_G$ are also simply-connected,
in particular $H_1(Z_G, \Z) = 0$.

Then $\Pic(Z_G) = H^2(Z_G, \Z)$ is finitely generated free abelian group.
Keum shows in \cite{Ke09} that Kodaira dimension $\kappa(Z_G) = 1$ (see also Ishida \cite{Ishida}).
Thus Bloch conjecture for $Z_G$ is true by Bloch--Kas--Lieberman \cite{BKL},
that is $CH_0(Z_G) = \Z$. 
Now by Lemma 2.7 of \cite{GS} it follows that $K_0(Z_G)$ is a finitely generated 
free abelian group, and the same holds for $K_0^G(S) = K_0(\Db_G(S))$.

The computation of Euler numbers shows that
\[
\text{(number of objects in (\ref{equiv-collections}))} = \dim HH_*(\Db_G(S)) = \rk K_0(\Db_G(S)).
\]
Finally, the additivity of the Grothendieck group implies that $\AA_S^G$ is a $K$-phantom.
\end{proof}

\section*{Appendix: automorphisms and first homology
groups of fake projective planes}

Recall that all fake projective planes $S$ are quotients of a complex ball $B \subset \C\P^2$
by a cocompact torsion-free arithmetic subgroup $\Pi = \pi_1(S)$ \cite{PY07}, \cite{CS},
and each of the fifty possible groups $\Pi$ corresponds to 
a pair complex conjugate surfaces $S$ and $\overline{S}$ which are not isomorphic
to each other \cite{KK}.
The first homology group $H_1(S,\Z)$ of $S$ is isomorpic to the abelianisation of the 
$\Pi / [\Pi,\Pi]$ and the automorphism group equals $\Aut(S) = N(\Pi) / \Pi$,
where $N(\Pi)$ is the normaliser of $\Pi$ (in maximal arithmetic group $\overline{\Gamma}$
and hence in any group that contains it, in particular in $PU(2,1)$).



We enhance the classification table of the fake projective planes given in \cite{CS} 
which is based on GAP and Magma computer code and
its output \cite{CScode} 
with the automorphism group $Aut(S)$ and the first homology group $H_1(S, \Z)$, 
which we also take from \cite{CScode}.

In the table $\overline{\Gamma}$ is described using the following data:
$l$ is a totally complex quadractic extension of a totally real field,
$p$ is a prime $2$, $3$ or $5$, 
$\T_1$ is a set of prime numbers (possibly empty).

$N$ is the index $[\overline{\Gamma} : \Pi]$ and $suf.$ is
the suffix ($a,b,c,d,e$ or $f$) of each group in \cite{CScode}.
$G_{21}$ is the non-abelian group of order $21$.
In the last column symbol $[n_1,\dots,n_k]$ denotes the abelian group 
$(\Z/n_1 \Z)\times\dots\times(\Z/n_k\Z)$.

Consider the quotient-map $f: N(\Pi) \to N(\Pi)/\Pi = \Aut (S)$ and
for a subgroup $G \subset \Aut (S) = N(\Pi)/\Pi$
let $\Pi_G \subset N(\Pi) \subset \overline{\Gamma}$
be the preimage $\Pi_G = f^{-1} G$. Line bundle $\O_S(1)$
is $G$-linearisable $\iff$ group $\Pi_G$ lifts from $PU(2,1)$
to $SU(2,1)$. Computation of Cartwright and Steger \cite{CSpriv} shows that
it holds for all $S$ and $G$ unless group $\overline{\Gamma}$ lies in classes $\CC_{2}$ or $\CC_{18}$. 
Fundamental group of the quotient-surface $\pi_1(S/G)$ equals $\Pi_G/E$ where $E \subset \Pi_G$ is the subgroup generated by elliptic elements
(cf the proof of Proposition \ref{K-phantoms}).
All those groups for all $S$ and $G \subset \Aut (S)$ were also computed in \cite{CSpriv}:
surface $S/G$ is simply-connected in twelve cases, including the four cases of Proposition \ref{K-phantoms}.

\newpage

{\tiny 
\begin{longtable}{|c|c|c|c|c||c|c|c|}

\hline
$l$ or $\CC$ & $p$ & $\T_1$ & $N$ & $\#\Pi$ &
$suf.$ & $\Aut(S)$ & $H_1(S,\Z)$ \\

\hline
$\Q(\sqrt{-1})$ & $5$ & $\emptyset$ & $3$ & $2$ & 
$a$ & $\Z/3\Z$ & $[2,4,31]$   \\

& & $\emptyset/\{2I\}$ & & & 
$b/b$ & $\{1\}$ & $[2,3,4,4]$   \\

& & $\{2\}$ & $3$ & $1$ & 
$a$ & $\Z/3\Z$ & $[4,31]$  \\


\hline

$\Q(\sqrt{-2})$ & $3$ & $\emptyset$ & $3$ & $2$ & 
$a$ & $\Z/3\Z$ & $[2,2,13]$  \\

& & $\emptyset/\{2I\}$ & & & 
$b/b$ & $\{1\}$ & $[2,2,2,2,3]$ \\

& & $\{2\}$ & $3$ & $1$ & 
$a$ & $\Z/3\Z$ & $[2,2,13]$ \\


\hline

$\Q(\sqrt{-7})$ & $2$ & $\emptyset$ & $21$ & $3$ & 
$a$ & $\Z/3\Z$ & $[2,7]$ \\

& & & & & 
$b$ & $G_{21}$ & $[2,2,2,2]$ \\

& & & & & 
$c$ & $\{1\}$ & $[2,2,3,7]$  \\

& & $\{3\}$ & $3$ & $2$ & 
$a$ & $\Z/3\Z$ & $[2,4,7]$  \\

& & & & & 
$b$ & $\{1\}$ & $[2,2,3,4]$  \\

& & $\{3,7\}$ & $3$ & $2$ & 
$a$ & $\Z/3\Z$ & $[4,7]$  \\

& & & & & 
$b$ & $\{1\}$ & $[2,3,4]$  \\

& & $\{7\}$ & $21$ & $4$ & 
$a$ & $G_{21}$ & $[2,2,2]$  \\

& & & & & 
$b$ & $\Z/3\Z$ & $[2,7]$  \\

& & & & & 
$c$ & $\Z/3\Z$ & $[2,2,7]$  \\

& & & & & 
$d$ & $\{1\}$ & $[2,2,2,3]$  \\

& & $\{5\}$ & $1$ & $1$ & 
$-$ & $\{1\}$ & $[2,2,9]$  \\

& & $\{5,7\}$ & $1$ & $1$ & 
$-$ & $\{1\}$ & $[2,9]$  \\

\hline

$\Q(\sqrt{-15})$ & $2$ & $\emptyset$ & $3$ & $2$ & 
$a$ & $\Z/3\Z$ & $[2,2,7]$  \\

& & & & & 
$b$ & $\{1\}$ & $[2,2,2,9]$ \\

& & $\{3\}$ & $3$ & $3$ & 
$a$ & $\Z/3\Z$ & $[2,3,7]$ \\

& & & & & 
$b$ & $\Z/3\Z$ & $[2,2,2,3]$ \\

& & & & & 
$c$ & $\Z/3\Z$ & $[2,3]$ \\

& & $\{3,5\}$ & $3$ & $3$ & 
$a$ & $\Z/3\Z$ & $[3,7]$ \\

& & & & & 
$b$ & $\Z/3\Z$ & $[2,2,3]$ \\

& & & & & 
$c$ & $\Z/3\Z$ & $[3]$ \\

& & $\{5\}$ & $3$ & $2$ & 
$a$ & $\Z/3\Z$ & $[2,7]$ \\

& & & & & 
$b$ & $\{1\}$ & $[2,2,9]$ \\

\hline

$\Q(\sqrt{-23})$ & $2$ & $\emptyset$ & $1$ & $1$ & 
$-$ & $\{1\}$ & $[2,3,7]$ \\

& &\{23\} & $1$ & $1$ & 
$-$ & $\{1\}$ & $[3,7]$ \\

\hline
\hline

$\CC_{2}$ & $2$ & $\emptyset$ & $9$ & $6$ & 
$a$ & $(\Z/3\Z)^2$ & $[2,7]$ \\

& & & & & 
$b$ & $\Z/3\Z$ & $[2,7,9]$ \\

& & & & & 
$c$ & $\Z/3\Z$ & $[2,9]$ \\

& & & & & 
$d$ & $\Z/3\Z$ & $[2,9]$ \\

& & & & & 
$f$ & $1$ & $[2,3,3]$ \\

& & & & & 
$g$ & $1$ & $[2,3,3]$ \\

& & $\{3\}$ & $9$ & $1$ & 
$-$ & $(\Z/3\Z)^2$ & $[7]$ \\

\hline

$\CC_{10}$ & $2$ & $\emptyset$ & $3$ & $1$ & 
$-$ & $\Z/3\Z$ & $[2,7]$ \\

& & $\{17-\}$ & $3$ & $1$ &
$-$ & $\Z/3\Z$ & $[7]$ \\

\hline

$\CC_{18}$ & $3$ & $\emptyset$ & $9$ & $1$ &
$a$ & $(\Z/3\Z)^2$ & $[2,2,13]$ \\

& & $\emptyset$/\{2I\} & $1$ & $1$ & 
$b/d$ & $1$ & $[2,3,3]$ \\


& & $\{2\}$ & $3$ & $3$ &
$a$ & $\Z/3\Z$ & $[2,3,13]$ \\

&&&&&
$b$ &  $\Z/3\Z$ & $[2,3]$ \\

&&&&&
$c$ &  $\Z/3\Z$ & $[2,3]$ \\

\hline

$\CC_{20}$ & $2$ & $\emptyset$ & $21$ & $1$ & 
$-$ & $G_{21}$ & $[2,2,2,2,2,2]$ \\

& & $\{3-\}$ & $3$ & $2$ &  
$a$ & $\Z/3\Z$ & $[4,7]$ \\

& & & & &  
$b$ & $\{1\}$ & $[2,3,4]$ \\

& & $\{3+\}$ & $3$ & $2$ &  
$a$ & $\Z/3\Z$ & $[4,7]$ \\

& & & & &  
$b$ & $\{1\}$ & $[2,3,4]$ \\

\hline

\end{longtable}
}

\newpage

\medskip

\address{
Sergey Galkin,
National Research University Higher School of Economics\\
\email{Sergey.Galkin@phystech.edu}
}

\medskip

\address{
Ludmil Katzarkov,
University of Miami and University of Vienna\\
\email{lkatzark@math.uci.edu}
}

\medskip

\address{
Anton Mellit,
International Centre for Theoretical Physics\\
\email{Mellit@gmail.com}
}

\medskip

\address{
Evgeny Shinder,
University of Edinburgh\\
\email{E.Shinder@ed.ac.uk}
}

\end{document}